\theoremstyle{plain}
\newtheorem{theorem}{Theorem}[section]
\newtheorem{proposition}[theorem]{Proposition}
\newtheorem{lemma}[theorem]{Lemma}
\newtheorem{corollary}[theorem]{Corollary}
\theoremstyle{definition}
\newtheorem{assumption}[theorem]{Assumption}
\theoremstyle{remark}
\numberwithin{equation}{section}
\numberwithin{theorem}{section}
\newcommand{\mc}[1]{{\mathcal #1}}
\newcommand{\bb}[1]{{\mathbb #1}}
\newcommand{\ms}[1]{{\mathscr #1}}
\newcommand{\upbar}[1]{\,\overline{\! #1}}
\newcommand{\id}{{1 \mskip -5mu {\rm I}}}
\renewcommand{\epsilon}{\varepsilon}
\renewcommand{\tilde}{\widetilde}
\renewcommand{\hat}{\widehat}
\newcommand{\supp}{\mathop{\rm supp}\nolimits}
\newcommand{\Ent}{\mathop{\rm Ent}\nolimits}
\newcommand{\sce}{\mathop{\rm sc^-\!}\nolimits}
\def\Xint#1{\mathchoice
   {\XXint\displaystyle\textstyle{#1}}%
   {\XXint\textstyle\scriptstyle{#1}}%
   {\XXint\scriptstyle\scriptscriptstyle{#1}}%
   {\XXint\scriptscriptstyle\scriptscriptstyle{#1}}%
   \!\int}
\def\XXint#1#2#3{{\setbox0=\hbox{$#1{#2#3}{\int}$}
     \vcenter{\hbox{$#2#3$}}\kern-.5\wd0}}
\def\dashint{\Xint-}
\def\mm{\textrm{---}}
\def\Xiint#1{\mathchoice
   {\XXiint\displaystyle\textstyle{#1}}%
   {\XXiint\textstyle\scriptstyle{#1}}%
   {\XXiint\scriptstyle\scriptscriptstyle{#1}}%
   {\XXiint\scriptscriptstyle\scriptscriptstyle{#1}}%
   \!\iint}
\def\XXiint#1#2#3{{\setbox0=\hbox{$#1{#2#3}{\iint}$}
     \vcenter{\hbox{$#2#3$}}\kern-.5\wd0}}
\def\dashiint{\Xiint\mm}
\title[LDP for degenerate jump processes]{Donsker-Varadhan asymptotics
  for \\ degenerate jump Markov processes} 
\author[G.\ Basile]{Giada Basile}
\address{Giada Basile \hfill\break \indent
  Dipartimento di Matematica, Universit\`a di Roma `La Sapienza'
  \hfill\break \indent
  P.le Aldo Moro 5, 00185 Roma, Italy}
\email{basile@mat.uniroma1.it}
\author[L.\ Bertini]{Lorenzo Bertini}
\address{Lorenzo Bertini \hfill\break \indent
   Dipartimento di Matematica, Universit\`a di Roma `La Sapienza'
   \hfill\break \indent
   P.le Aldo Moro 5, 00185 Roma, Italy}
 \email{bertini@mat.uniroma1.it}
\begin{document}

\begin{abstract}
  We consider a class of continuous time Markov chains on a compact
  metric space that  admit an invariant measure strictly
  positive on open sets together with  absorbing states.  We prove
  the joint large deviation principle for the empirical measure and
  flow.  Due to the lack of uniform ergodicity, the zero level set of
  the rate function is not a singleton.  As  corollaries, we obtain the
  Donsker-Varadhan rate function for the empirical
  measure and a variational expression of the rate function for the empirical flow.
\end{abstract}

\keywords{Large deviations, Donsker-Varadhan theorem, Empirical flow,
Phonon Boltzmann equation}

\medskip

\subjclass[2000]{
60F10,  
60J75,  
74A25.  
}

\maketitle
\thispagestyle{empty}

\section{Introduction}
\label{s:1}

The energy transport in insulators can be described within a kinetic approach
analogous to  the kinetic theory of gases. At low
temperatures the lattice vibrations, responsible of energy transport, can be modeled  as
a gas of interacting particles (phonons) and their time-dependent distribution function  solves a Boltzmann type equation.
The basic scheme to derive phononic 
Boltzmann equations from the underlying microscopic dynamics is introduced in \cite{Sp}. Following this approach,
in \cite{BOS} a harmonic chain of oscillators perturbed by a conservative weak stochastic noise is analyzed and the following linear Boltzmann equation is derived 
\begin{equation}\begin{split}\label{Be0}
\partial_t W(t,r, k) +  v(k)\partial_r W(t,r,k)
=\int_{\bb T }\;dk'
R(k,k')[W(t,r,k')-W(t,r,k)].
\end{split}\end{equation}
Here $W$ is the energy density distribution of phonons with wave number $k\in\bb T$ (the one dimensional torus), $r$ is the space coordinate, $t$ is the time and $v(k)$ is the velocity of a phonon with wave number $k$ and it is given by the gradient of the dispersion relation.
The scattering kernel $R$ is positive and symmetric. Referring to \cite{BOS} for 
explicit expressions of $R$ and  $v$ and the analogous equation in higher dimensions, 
we point out the following  features.  The velocity  is finite for small  $k$ while  $R$ behaves like $ k^2$ for small $k$, and like $ k'^2$  for small $k'$. This means  that 
phonons with small wave numbers 
travel with finite velocity, but they have low probability to be
scattered, therefore their mean free paths 
have a macroscopic length (ballistic transport).

The equation \eqref{Be0} can be interpreted as the Fokker-Planck equation 
for the Markov process $\left(K(t),
Y(t)\right)$ on $\bb T\times \bb R$, where the wave number 
$K(t)$ is a jump process  and the position $Y(t)$ is an
additive functional of $K$, namely $Y(t)=\int_0^t ds\;
v(K(s))$. In view of the behavior of the kernel $R$ mentioned above, $k=0$ is an absorbing state for the process 
$K(t)$. On the other hand, 
the skeleton associated to $K$  admits an invariant measure $\pi_0$ for which the mean jump time is integrable.
As we prove, this condition implies the ergodicity of the process with $K(0)$ different from $0$. 
Nevertheless,
in dimension one and two the variance of the mean jump time with respect to $\pi_0$ is infinite, so that the standard  central limit theorem for the position $Y(t)$ fails. More precisely, in  one dimension  the position converges to a $3/2$ stable L\'evy process under the proper scaling  \cite{JKO, BB}, while in two  dimensions
it converges to a Brownian motion under an anomalous scaling with logarithmic corrections \cite{Ba}. 
The  purpose of the present paper is to analyze how the degenerate behavior of the kernel $R$ affects the large deviations properties of the process $K$.

In the general context of continuous time Markov processes, the
empirical measure associates to a given trajectory the fraction of
time spent on the different states up to a time $T$.  Under ergodicity
assumptions, the empirical measure converges to the invariant measure.
The corresponding large deviations asymptotic is the content of the
Donsker-Varadhan theorem, with a rate function given by a variational
formula that can be computed explicitly only in the reversible case.
A natural generalization of this framework in the setting of jump
processes takes into account, together with the empirical measure, the
empirical flow which counts the  number of jumps between the
different states per unit of time. We remark that a relevant dynamical observable, the
empirical current, is directly related to the empirical flow.

The joint large deviation asymptotics for the empirical measure and
flow can be derived by contraction from the corresponding result for
the empirical process, which yields the information on arbitrary
sequences of jumps. The corresponding
rate function can be always written in a (simple) closed form. The Gallavotti-Cohen large
deviation principle \cite{M,LS} and the associated fluctuation theorem
can be obtained by projection \cite{BFG3}. Moreover, by contraction one also derives
 a dual variational formula for the rate function of the empirical measure.

Alternatively, the joint large deviations for the empirical measure and flow 
 can be directly derived  by  tilting the underlying Markov chain. 
Indeed, with this approach it has  been firstly  derived  in \cite{WK}
for a Markov chain with two states. Always in the context of discrete state space,
a large deviations principle for  flows and currents
have been discussed in \cite{BMN} in relation to statistical mechanics models.
The general case of countable state space is analyzed in \cite{BFG}, to which we refer for further references.

With respect to this setting, the phononic chain described above lives on a  continuous state space and lacks uniform ergodicity due to the presence of zero as absorbing state. In particular the classical Donsker-Varadhan  conditions 
\cite{DV4, DeS} do not hold. 
Motivated by this model, 
we consider a class of  continuous time Markov chains    which are degenerate in the sense that there exist  states
with infinite holding time, but the corresponding skeletons admit an invariant measure for which the mean jump time is integrable.
For simplicity, we restrict to the case of compact state space.
We prove a large deviation principle for the  empirical measure and flow, with an initial state different from the absorbing ones. 

The rate function is the  continuous version of the one derived in \cite{BFG} for discrete space states.
The presence of  absorbing states is however reflected in the  properties of the rate function. 
Its zero level set is not a singleton and more precisely it contains convex combinations of the invariant measure with the associated flow 
and measures supported by the absorbing states  with zero flow. 
Indeed, with sub-exponential probability, the chain may spend almost all the time in a small neighborhood of the absorbing states. 
Analogous degenerate large deviation asymptotics have been obtained in 
\cite{LMZ,LMZ2} in the context of renewal processes and in \cite{BLT,BT} in
the context of interacting particle systems.

From the large deviation principle for the empirical measure and flow we deduce by contraction the large deviation principle 
for the empirical measure. The corresponding 
rate function can be expressed by the Donsker-Varadhan variational formula, which in this case also admits a not trivial zero level set.
Furthermore, we also obtain a variational expression for the rate function describing the large deviation asymptotics of the empirical flow.

The large deviation upper bound for the empirical measure and flow is proven   
 by perturbing the rates of the
underlying Markov chain. We remark that
this step can be accomplished since  the Radon-Nikodym derivative of the corresponding laws can be expressed in
terms of the empirical measure and flow.
We derive the lower bound by considering first deviations of measures and flows with support bounded away from the absorbing state. 
For this class we can construct perturbed Markov chains with nice ergodic properties, which have these measures and flows as typical behavior. 
We then complete the proof by a density argument.
  
 


\section{Notation and results}
\label{definizioni}

Let $E$ be a compact Polish space, i.e.\ metrizable complete and
separable, endowed with its Borel $\sigma$-algebra.  
The spaces of continuous functions on $E$ and $E\times E$, endowed
with the  uniform norm $\|\cdot\|$, are denoted by 
$C(E)$ and $C(E\times E)$.
We consider a continuous time Markov chain $\xi_t$, $t\in \bb R_+$ on
the state space $E$, defined by transitions rates $c(x,dy)=r(x)
p(x,dy)$, where $r\colon E\to \bb R_+$ and $p$ is a transition kernel on
$E$. Throughout all the paper we assume the transition rates satisfy
the following conditions which in particular imply that $\xi$ is not
explosive and Feller.

\begin{assumption}
  \label{t:1.1}$~$
  \begin{itemize}
  \item[(i)] 
    The function $r\colon E\to \bb R _+$ is continuous.
    We set $E_0:=\{x\in E:\, r(x)=0\}$.
  \item[(ii)] 
    There exists a probability $\lambda$ on $E$, strictly
    positive on open sets, such that $p(x,dy)= p(x,y)\lambda(dy)$ for
    some strictly positive density $p\in C(E\times E)$.
  \item[(iii)] 
    The function $1/r$ is integrable with respect to $\lambda$, i.e., 
    $\lambda\big(1/r\big) <+\infty$.
  \end{itemize}  
  In order to prove the large deviations lower bound we also need the
  following technical condition.
  \begin{itemize}
  \item[(iv)] 
    For $\delta>0$, let $A_\delta:=\{x\in E:\,r(x)<\delta\}$ be the
    (open) level set of $r$.  There exists a sequence $\delta_n\to 0$ such
    that $\sup_n\lambda(A_{2\delta_n}) /
    \lambda(A_{2\delta_n}\!\!\!\setminus\!  A_{\delta_n})<+\infty$.
  \end{itemize}
\end{assumption}

Since $r$ is continuous $E_0$ is closed. Moreover, in view of condition (iii) $\lambda(E_0)=0$.
Assumption (ii) implies that the kernel $p$ is Feller and satisfies
the Doeblin condition.  In view of \cite[Thm.~16.0.2]{MT} the discrete
time Markov chain with kernel $p$ is uniform ergodic.  That is, there
exists a probability $\pi_0$ on $E$ such that $p^n(x,\cdot)$ converges
in total variation to $\pi_0$ uniformly with respect to $x\in E$.
Moreover, since $\lambda$ is strictly positive on open sets, $\pi_0$
enjoys the same property.  In view of items (ii) and (iii), $\pi_0
\big(1/r \big)<+\infty$ and therefore
\begin{equation}
  \label{mm}
  \pi(dx):= \frac 1 { \pi_0 \big(1/r \big)} \, \frac {\pi_0(dx)}{r(x)} 
\end{equation}
defines a probability on $E$. 
As it is simple to check, $\pi$ is an invariant probability for the
continuous time chain $\xi$.

As discussed in the Introduction, the main novelty of this paper is
that we allow the set $E_0$ to be not empty. If this is the case, the points in $E_0$ are  absorbing
states for the chain $\xi$. In particular any probability supported on a subset of $E_0$ is also an invariant
measure and $\xi$ is not uniformly ergodic. Then the
standard conditions for the Donsker-Varadhan theorem, see e.g.,
\cite{DeS,DV4} do not hold. The phononic chain
described by \eqref{Be0} (see \cite{BOS} for the explicit expression of the rates) 
meets the requirements in Assumption~\ref{t:1.1} with $E_0$ the singleton at the point $0$.

We next state the ergodic theorem for the chain $\xi$. 
For $x\in E$ we denote by $\bb P_x$ the distribution of the process
$\xi$ with initial condition $x$.  Observe that $\bb P_x$ is a
probability on the Skorokhod space $D(\bb R_+;E)$ whose 
canonical coordinate will be denoted by $X_t$, $t\in \bb R_+$.
The expectation with respect to  $\bb P_x$ is denoted by  $\bb E_x$.

\begin{theorem}
  \label{t:lln}
  Let $f\in C(E)$ and $x\in E\setminus E_0$. Then 
  \begin{equation*}
    \lim_{T\to +\infty} \frac 1T \int_0^T\!dt\, f(X_t) = \pi(f)
    \qquad \textrm{in $\bb P_x$ probability.}
  \end{equation*}
  Moreover, the convergence is uniform with respect to $x$ in a
  compact subset of $E\setminus E_0$.
\end{theorem}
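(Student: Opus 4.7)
The plan is to reduce the statement to a law of large numbers for the uniformly ergodic skeleton $(Y_n)_{n\geq 0}$ governed by the kernel $p$, coupled with an averaging of the conditionally independent holding times. Let $\sigma_n$ denote the holding time at $Y_n$, $S_N=\sum_{n=0}^{N-1}\sigma_n$ the instant of the $N$-th jump, and $N_T$ the number of jumps in $[0,T]$. Since $r(x)>0$ and, by Assumption~\ref{t:1.1}~(ii)--(iii), $\lambda(E_0)=0$, we have $Y_n\in E\setminus E_0$ for every $n\geq 1$, $\bb P_x$-a.s., so all holding times are a.s.\ finite and the process does not explode. The decomposition
\begin{equation*}
\int_0^T f(X_t)\, dt = \sum_{n=0}^{N_T-1} \sigma_n\, f(Y_n) + (T-S_{N_T})\, f(Y_{N_T})
\end{equation*}
together with the boundedness of $f$ reduces the proof to showing
\begin{equation*}
\frac{1}{N}\sum_{n=0}^{N-1}\sigma_n f(Y_n) \longrightarrow \pi_0(f/r),
\qquad
\frac{1}{N}\sum_{n=0}^{N-1}\sigma_n \longrightarrow \pi_0(1/r),
\end{equation*}
in $\bb P_x$-probability, since the ratio then tends to $\pi_0(f/r)/\pi_0(1/r)=\pi(f)$ by \eqref{mm}.

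The first ingredient is a law of large numbers for the skeleton. As observed in the text, the Doeblin condition implied by Assumption~\ref{t:1.1}~(ii) yields uniform ergodicity of $(Y_n)$ with invariant probability $\pi_0$; in particular, for every bounded $g\in C(E)$, $N^{-1}\sum_{n<N} g(Y_n)\to \pi_0(g)$ in $\bb P_x$-probability uniformly in $x$. We apply this to $g=f/r$ and $g=1/r$, which belong to $L^1(\pi_0)$ because the density of $\pi_0$ with respect to $\lambda$ is bounded and $\lambda(1/r)<+\infty$; to deal with their unboundedness we truncate at the level sets $A_\delta=\{r<\delta\}$, use the $L^1(\pi_0)$ bound to control the tail, and apply the uniform LLN to the bounded truncated function. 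The second ingredient controls the conditional fluctuations of the holding times. Conditionally on the $\sigma$-algebra $\ms G$ generated by $(Y_n)$, the variables $\sigma_n$ are independent with $\bb E[\sigma_n\mid \ms G]=1/r(Y_n)$ and $\mathrm{Var}[\sigma_n\mid \ms G]=1/r(Y_n)^2$. A conditional Chebyshev estimate, again truncated on $\{r\geq\delta\}$ and combined with the integrability $\pi_0(1/r)<+\infty$ to control the complementary region, gives
\begin{equation*}
\frac{1}{N}\sum_{n=0}^{N-1}\bigl(\sigma_n-1/r(Y_n)\bigr)\longrightarrow 0
\quad\text{and}\quad
\frac{1}{N}\sum_{n=0}^{N-1}f(Y_n)\bigl(\sigma_n-1/r(Y_n)\bigr)\longrightarrow 0
\end{equation*}
in $\bb P_x$-probability. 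Combining the two ingredients produces the two displayed convergences.

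To pass from sums indexed by $N$ to the continuous-time average, note that $T=S_{N_T}+O(1)$ and, since $S_N/N\to\pi_0(1/r)>0$ in probability, $N_T\to+\infty$ as $T\to+\infty$, so the previous step yields $\tfrac1T\int_0^T f(X_t)\, dt\to\pi(f)$ in $\bb P_x$-probability. Uniformity on compact subsets $K\subset E\setminus E_0$ follows because $r$ is bounded away from zero on $K$, the Doeblin rate is uniform in the starting point, and the truncation estimates depend only on $\lambda$, $\pi_0$ and $\|f\|$. The main obstacle is the degeneracy of $1/r$ near the absorbing set $E_0$: standard LLNs for bounded functions do not apply directly to $1/r$ and $f/r$, nor does the conditional variance $\sum 1/r(Y_n)^2$ enjoy an a priori bound, and both difficulties must be circumvented by exploiting $\lambda(1/r)<+\infty$ through the cut-offs $\{r\geq\delta\}$.
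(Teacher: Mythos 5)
Your proof follows essentially the same route as the paper: reduce to the skeleton $\{Z_i\}$ with holding times $\{\tau_i\}$, prove the LLN for $\frac1n\sum f(Z_i)\tau_i$ by truncating $1/r$ and exploiting uniform ergodicity of the skeleton (Doeblin) together with $\lambda(1/r)<+\infty$, then transfer to continuous time via the convergence of $n(T)/T$. The only organizational differences are that you truncate at fixed level sets $A_\delta$ (two-parameter limit $N\to\infty$ then $\delta\to0$) whereas the paper truncates at the $n$-dependent threshold $n^{1/4}$, and that you separate the skeleton LLN from the conditional fluctuations of the $\tau_i$ while the paper combines both into one $L^2$/covariance estimate; also note that the tail control really rests on $p(x,y)\le C$ giving $\bb E_x[h(Z_i)]\le C\lambda(h)$, not on $L^1(\pi_0)$-integrability alone.
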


We denote by $\mc M_1(E)$ the space of probability measures on $E$
endowed with the topology of weak convergence.  Given $T>0$, the
\emph{empirical measure} $\mu_T$ is the continuous map from $D(\bb R_+;E)$
to $\mc M_1(E)$ defined by
\begin{equation}\label{em}
  \mu_T(f)\, (X) := \frac 1T\int_0^T\!dt\, f(X_t), \qquad f\in C(E).
\end{equation}
Theorem~\ref{t:lln} can be then restated as follows.
As $T\to +\infty$ the family $\big\{\bb P_x\circ
\mu_T^{-1}\big\}_{T>0}$ converges to $\delta_{\pi}$ uniformly with
respect to $x$ in a compact subset of $E\setminus E_0$.

To describe the large deviation asymptotic of the empirical measure,
we follow the approach introduced in \cite{BFG} for discrete state
space. Within this scheme, together with the empirical measure it is
also considered the empirical flow which accounts for the number of
jumps between two given states. For this purpose, we let $\mc
M_+(E\times E)$ be the space of finite positive measures on $E\times
E$ equipped with the \emph{bounded weak*} topology.  This is defined
as follows. Let $\mc M(E\times E)$ be the set of finite signed
measure on $E\times E$. The weak* topology on $\mc M(E\times E)$ is
then defined by identifying it with the dual of $C(E\times E)$. For
$Q\in\mc M(E\times E)$ denote by $\|Q\|_\mathrm{TV}$ the total
variation of $Q$ and, given $\ell>0$, let $B_\ell := \big\{ Q\in \mc M
:\, \|Q\|_\mathrm{TV} \le \ell\big\}$ be the closed ball of radius
$\ell$ in $\mc M(E\times E)$.  The bounded weak* topology on $\mc
M(E\times E)$ is then defined by declaring a set $A\subset \mc
M(E\times E)$ open if and only if $A\cap B_\ell$ is open in the weak*
topology of $B_\ell$ for any $\ell>0$. In particular, the bounded
weak* topology is stronger than the weak* topology and, as follows
from the Banach-Alaoglu theorem, for each $\ell>0$ the closed ball
$B_\ell$ is compact with respect to the bounded weak* topology.  The
space $\mc M(E\times E)$ endowed with the bounded weak* topology is a
locally convex, complete, linear topological space, and a completely
regular space, i.e., for each closed set $C \subset \mc M(E\times E)$
and each $Q \in \mc M (E\times E)\setminus C$ there exists a
continuous function $f\colon\mc M(E\times E) \to [0,1]$ such that
$f(Q)=1$ and $f(Q')=0$ for all $Q'\in C$).  We refer to
\cite[\S~2.7]{Me} for the proof of the above statements and for
further details.  We finally regard $\mc M_+(E\times E)$ as a (closed)
subset of $\mc M (E\times E)$ and consider it endowed with the
relative topology and the associated Borel $\sigma$-algebra.

For $T>0$, the \emph{empirical flow} $Q_T$ is the map from $D(\bb R_+;E)$ to
$\mc M_+(E\times E)$ defined $\bb P_x$ a.s., $x\in E$, by
\begin{equation*}
   Q_T(F)\, (X) := \frac 1T\sum_{\substack{t\in [0,T] \,: \\ \:X_{t^-}\neq X_t}} 
    F\big(X_{t^-},X_t\big), 
    \qquad F\in C(E\times E).
\end{equation*}
Observe indeed that the right hand side is well defined because $\bb
P_x$ a.s., $x\in E$, the set of discontinuities of $X_t$ is locally
finite. 
In view of Theorem~\ref{t:lln}, a straightforward martingale
decomposition, see Proposition~\ref{t:lbns} below, yields 
the following law of large numbers for empirical flow.  
Let $Q^\pi(dx,dy):=\pi(dx)c(x,dy)$, then
as $T\to +\infty$ the family $\big\{\bb P_x\circ Q_T^{-1}\big\}_{T>0}$
converges to $\delta_{Q^\pi}$ 
uniformly with respect to $x$ in a compact subset of
$E\setminus E_0$.
 
We regard the pair $(\mu_T, Q_T)$ as a map from $D(\bb
R_+;E)$ to the product space $\ms M :=\mc M_1(E)\times \mc
M_+(E\times E)$ defined $\bb P_x$ a.s., $x\in E$.
Our main result is the large deviation principle for the family
$\big\{\bb P_x\circ (\mu_T, Q_T)^{-1}\big\}_{T>0}$. We start by
defining the rate function.  
Let $\Psi\colon\bb R_+\to \bb R_+$ be the convex function 
$\Psi (a) := a \log a -(a-1)$, in which we understand that
$\Psi(0)=1$. We then define the functional
$I\colon \ms M\to [0,+\infty]$  by
\begin{equation}
  \label{I}
  I(\mu,Q) :=
  \begin{cases}
    \displaystyle \iint\!\mu(dx)c(x,dy) \:
    \Psi\Big(\frac{Q(dx,dy)}{\mu(dx) c(x,dy)}  \Big) 
    & \textrm{ if }\;  Q(\cdot, E)=Q(E,\cdot),
    \\
    \vphantom{\big\{^\{}
    +\infty & \textrm{ otherwise. }
  \end{cases}
\end{equation}
Observe that $I(\mu, Q)<+\infty$ implies that the two marginals of $Q$ 
are equal and $Q(dx,dy)\ll \mu(dx)c(x,dy)$.
Moreover, since the second marginal of $\mu(dx) c(x,dy)$ is
absolutely continuous with respect to $\lambda$, $I(\mu,Q)<+\infty$
also implies $dQ= q\, d\lambda\times d\lambda$, see Lemma~\ref{t:rl}
below. It is thus possible to express $I$ in terms of the density $q$ as
follows.  Given $\mu\in \mc M_1(E)$, decompose it into the
absolutely continuous and singular parts with respect to $\lambda$.
 Namely, $d\mu=\varrho \, d\lambda + d\mu_{\mathrm{s}}$ where
$\varrho$ is a sub-probability density on $E$ and $\mu_{\mathrm{s}}$
is singular with respect to $\lambda$. 
Let $\Phi\colon {\bb R_+^2\to[0, +\infty]}$ be the convex lower
semi-continuous function defined by $\Phi(a,b):=a\log (a/b)-a+b$.  
If $I(\mu, Q)<+\infty$ then the marginals of $Q$ are equal and
\begin{equation}\label{I1}
  I(\mu,Q)=\iint \! \lambda(dx)\lambda(dy) \: 
  \Phi\big(q(x,y),\varrho(x)r(x)p(x,y)\big) +\mu_{\mathrm{s}}(r).
\end{equation}
In particular, if $\mu=\mu_{\mathrm{s}}$ then $I(\mu,Q)<+\infty$ implies $Q=0$.

\begin{theorem}
  \label{t:ldp}
  As $T\to +\infty$ the family $\big\{\bb P_x\circ (\mu_T,
  Q_T)^{-1}\big\}_{T>0}$ satisfies, uniformly with respect to $x$
  bounded away from $E_0$, a large deviation principle with good
  convex rate function $I$. Namely, the functional $I$ has compact
  level sets and for each compact $E_1\subset E\setminus E_0$, each
  closed $C\subset \ms M$, respectively each open $A\subset \ms M$,
  \begin{equation*}
    \begin{split}
      &\varlimsup_{T\to+\infty}\;\sup_{x\in E_1} \;\frac 1T \log \bb
      P_x\big( (\mu_T,Q_T)\in C \big)
      \le - \inf_{(\mu,Q)\in C}  I (\mu,Q), \\
      &\varliminf_{T\to+\infty}\;\inf_{x\in E_1} \;\frac 1T \log \bb
      P_x\big( (\mu_T,Q_T)\in A \big) 
      \ge - \inf_{(\mu,Q)\in A} I(\mu,Q).
    \end{split}
  \end{equation*}
\end{theorem}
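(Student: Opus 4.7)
The plan is to follow the tilted-rates strategy indicated in the introduction: prove the upper bound by writing the Radon--Nikodym derivative of a perturbed law explicitly as $\exp(T\Lambda_F(\mu_T,Q_T))$ for a suitable functional $\Lambda_F$, prove the lower bound by producing, for a dense class of admissible $(\mu,Q)$, a perturbed chain whose typical behaviour is exactly $(\mu,Q)$, and finally remove the restrictions by an approximation argument. Since $E$ is compact $\mc M_1(E)$ is already compact, so exponential tightness reduces to a uniform exponential bound on $Q_T(E\times E)$. Applying the standard exponential martingale with constant test function $F\equiv \alpha$ yields
\begin{equation*}
\bb E_x\bigl[\exp(\alpha T Q_T(E\times E))\bigr]\le \exp\bigl(T(e^\alpha-1)\|r\|_\infty\bigr)
\end{equation*}
uniformly in $x\in E$, which gives the desired exponential tightness and, together with the representation $I(\mu,Q)=\sup_F \Lambda_F(\mu,Q)$ where $\Lambda_F(\mu,Q):=Q(F)-\mu(J_F)$ and $J_F(x):=\int(e^{F(x,y)}-1)c(x,dy)$, the goodness of $I$. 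Since $\exp(T\Lambda_F(\mu_T,Q_T))$ is a mean-one $\bb P_x$-martingale, Markov's inequality gives $\bb P_x((\mu_T,Q_T)\in C)\le \exp(-T\inf_C \Lambda_F)$, uniformly in $x$; a standard min--max/compact-exhaustion argument then upgrades this to the uniform upper bound with rate $I$, once one verifies the variational identity above (the choice $F(x,y)=\alpha(g(y)-g(x))$ with $\alpha\to\infty$ detects mismatched marginals).

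For the lower bound I would first treat the regular class of pairs $(\mu,Q)$ with $d\mu=\varrho d\lambda$ and $\varrho$ continuous and bounded away from $0$, with $Q\ll \mu(dx)c(x,dy)$ and density $\varphi:=dQ/d(\mu\otimes c)$ continuous and bounded away from $0$ and $\infty$, and with matching marginals. Setting $\tilde c(x,dy):=\varphi(x,y)c(x,dy)$, the perturbed chain $\tilde \xi$ has uniformly bounded and bounded-away-from-zero jump rate, so it is uniformly ergodic with invariant measure $\mu$ and typical empirical flow $Q$; applying Theorem~\ref{t:lln} to $\tilde\xi$ gives $(\mu_T,Q_T)\to(\mu,Q)$ in $\tilde{\bb P}_x$-probability. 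The explicit identity
\begin{equation*}
\log\frac{d\tilde{\bb P}_x}{d\bb P_x}\bigg|_{\mathcal F_T}=T Q_T(\log\varphi)-T\mu_T(\tilde r-r),\qquad \tilde r(x):=\int \tilde c(x,dy),
\end{equation*}
combined with this convergence gives $T^{-1}\bb E_x\bigl[\log(d\tilde{\bb P}_x/d\bb P_x)\bigr]\to I(\mu,Q)$, and the usual entropy-inequality lower bound then produces the estimate on any neighbourhood of $(\mu,Q)$.

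The last step is a density argument extending the lower bound to all pairs $(\mu,Q)$ with $I(\mu,Q)<\infty$. Using the representation \eqref{I1}, decompose $\mu=\varrho\lambda+\mu_s$ and approximate $(\mu,Q)$ by convex combinations of a regular pair handled above with a measure concentrated near $E_0$ and with zero flow, that captures the singular part $\mu_s$ together with its contribution $\mu_s(r)$; lower semicontinuity of $I$ and the explicit form \eqref{I1} ensure $I$ of the approximants converges to $I(\mu,Q)$, while trajectories realising these profiles are produced by strong-Markov concatenation of long sojourns near $E_0$ with a trajectory of a regular tilted chain. Uniformity in $x$ bounded away from $E_0$ is automatic in the upper bound and is inherited by the lower bound from the uniformity in Theorem~\ref{t:lln}. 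The main obstacle I expect is precisely this density step: realising the singular contribution $\mu_s(r)$ with the correct exponential cost requires exploiting the degeneracy of $r$ near $E_0$, and Assumption~\ref{t:1.1}(iv) on the level sets of $r$ is built in exactly to provide the geometric room needed to tilt the chain so that trajectories spend a prescribed fraction of time close to, but not in, the absorbing set.
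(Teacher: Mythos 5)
Your overall strategy (exponential tilting for the upper bound, a perturbed chain plus a density argument for the lower bound) matches the paper's, and your formula for the Radon--Nikodym derivative of the tilted chain agrees with the paper's Proposition~\ref{t:lbns}. However, there is a genuine gap in your upper bound. You claim that $I(\mu,Q)=\sup_F\Lambda_F(\mu,Q)$ where $\Lambda_F(\mu,Q)=Q(F)-\mu(J_F)$, and that the choice $F=\alpha(g(y)-g(x))$ with $\alpha\to\infty$ detects mismatched marginals. This is false: if $Q\ll\mu(dx)c(x,dy)$ with density $\psi$, then by pointwise Legendre duality $\sup_F\Lambda_F(\mu,Q)=\iint\mu(dx)c(x,dy)\,\Psi(\psi(x,y))$ regardless of whether the two marginals of $Q$ agree. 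The penalty $\mu(J_F)$ grows exponentially in $\alpha$, so pushing $\alpha\to\infty$ does not make $\Lambda_F$ diverge. Consequently $\sup_F\Lambda_F<I=+\infty$ whenever $Q\ll\mu\otimes c$ has unequal marginals, and the martingale-plus-minimax argument as you wrote it only yields the upper bound with the strictly weaker rate $\sup_F\Lambda_F$. The fix, which the paper uses, is the pathwise continuity equation
\begin{equation*}
  \phi(X_T)-\phi(X_0)=T\big[Q_T^{(2)}(\phi)-Q_T^{(1)}(\phi)\big],
\end{equation*}
which lets you slip the \emph{linear} term $Q^{(1)}(\phi)-Q^{(2)}(\phi)$ into the exponent at the cost of a bounded factor $e^{2\|\phi\|}$; this is what enforces the marginal constraint and gives the rate $\sup_{\phi,F}I_{\phi,F}=I$ (Lemma~\ref{t:rl}).

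On the lower bound, your tilted-chain step is correct and coincides with the paper's Proposition~\ref{t:lbns}. Your density step is where you signal uncertainty, and indeed that is where most of the work lies: the paper proceeds through an explicit chain of approximations $\ms M_0\subset\ms M_1\subset\ms M_2\subset\ms M_3\subset\ms M_4$ (Lemmata~\ref{t:l0}--\ref{t:l3}) and then takes convex combinations of the absolutely continuous and singular pieces (Theorem~\ref{t:de}). Assumption~\ref{t:1.1}(iv) is used exactly in the step (Lemma~\ref{t:l2}) that pushes mass off the set $A_\delta=\{r<\delta\}$ into the annulus $A_{2\delta}\setminus A_\delta$ with controlled entropy cost, so your intuition about its role is on target. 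One small correction: the paper does not realise the singular profiles by ``strong-Markov concatenation of long sojourns near $E_0$''; it relies instead on the general entropy lower-bound lemma (Lemma~\ref{t:rlb}), which replaces any explicit trajectory construction once the $I$-density of $\ms M_0$ is established.
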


By the stationarity condition for $\pi$, the
measure $Q^\pi(dx, dy)=\pi(dx)c(x,dy)$ has equal marginals. We thus
deduce, as must be the case, that $I(\pi, Q^\pi)=0$. On the other
hand, if $E_0$ is not empty, the zero level set of $I$ contains other points
and the law of large numbers stated in Theorem~\ref{t:lln} cannot be
deduced from the large deviation result. More precisely, if the measure $\mu$ is supported on a subset of $E_0$, then
$I(\mu, 0)=0$ and, by convexity, $I$ vanishes on the
segment $\alpha(\pi, Q^\pi)+(1-\alpha)(\mu,0)$,
$\alpha\in[0,1]$.  The representation \eqref{I1} implies  that elements of this form are the only zeros of $I$.

As a corollary of the previous theorem, we deduce the large deviations
asymptotic for the empirical measure. We emphasize that the
corresponding rate function is the standard Donsker-Varadhan
functional.

\begin{corollary}
  \label{t:ldem}
  Let $\hat I \colon \mc M_1(E)\to [0,+\infty]$ be the functional defined by 
  \begin{equation*}
	\hat I (\mu) =\sup_{\phi\in C(E)} \Big\{ - \iint \!\mu(dx)c(x,dy) \, 
    \big[ \exp\{\phi(y)-\phi(x)\} -1 \big]\Big\}.
  \end{equation*}
  As $T\to +\infty$ the family $\big\{\bb P_x\circ \mu_T^{-1}\big\}_{T>0}$
  satisfies, uniformly with respect to $x$ bounded away from $E_0$, a
  large deviation principle with convex rate function $\hat I$.
\end{corollary}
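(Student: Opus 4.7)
The natural approach is to deduce Corollary~\ref{t:ldem} from Theorem~\ref{t:ldp} by the contraction principle, followed by an identification of the contracted rate function with the Donsker--Varadhan functional $\hat I$. Since the projection $\pi_1\colon \ms M\to \mc M_1(E)$, $(\mu,Q)\mapsto \mu$, is continuous and $I$ is a good rate function, the contraction principle yields at once an LDP for $\{\bb P_x\circ \mu_T^{-1}\}_{T>0}$, uniformly for $x$ in a compact subset of $E\setminus E_0$ (the contraction being implemented by a deterministic continuous map, independent of $x$), with good convex rate function
\begin{equation*}
  \tilde I(\mu) := \inf_{Q\in \mc M_+(E\times E)} I(\mu,Q).
\end{equation*}
The task thus reduces to showing $\tilde I=\hat I$, a Legendre--Fenchel type identity.

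The easy inequality $\tilde I\ge\hat I$ is obtained as follows. For any $Q$ with $I(\mu,Q)<+\infty$, one has $Q\ll \mu\otimes c$ and equal marginals, so $\iint Q(dx,dy)[\phi(y)-\phi(x)]=0$ for every $\phi\in C(E)$. The Young inequality $\Psi(a)\ge a\xi-(e^\xi-1)$, which encodes $\Psi^*(\xi)=e^\xi-1$, applied with $a=dQ/d(\mu\otimes c)$ and $\xi(x,y)=\phi(y)-\phi(x)$ and integrated against $\mu\otimes c$, gives
\begin{equation*}
  I(\mu,Q)\ge \iint Q(dx,dy)[\phi(y)-\phi(x)] - \iint \mu(dx)c(x,dy)\big[e^{\phi(y)-\phi(x)}-1\big];
\end{equation*}
the first term vanishes, and taking the sup over $\phi$ yields $I(\mu,Q)\ge \hat I(\mu)$, hence $\tilde I\ge\hat I$.

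For the reverse inequality I would run a minimax argument. Set
\begin{equation*}
  L(\mu,Q,\phi) := \iint \mu(dx)c(x,dy)\,\Psi\!\left(\tfrac{dQ}{d(\mu\otimes c)}\right) - \iint Q(dx,dy)[\phi(y)-\phi(x)],
\end{equation*}
defined for $Q\ll\mu\otimes c$ and $+\infty$ otherwise, with $\phi\in C(E)$ playing the role of a Lagrange multiplier for the equal-marginals constraint. Then $\sup_\phi L(\mu,Q,\phi)=I(\mu,Q)$ (if $Q$ does not have equal marginals the sup of the linear term in $\phi$ diverges), so $\inf_Q\sup_\phi L=\tilde I(\mu)$; pointwise minimization in $q\ge 0$ of $\Psi(q)-q\,\xi$, attained at $q=e^\xi$ with value $1-e^\xi$, gives $\inf_Q L(\mu,Q,\phi)=-\iint \mu(dx)c(x,dy)[e^{\phi(y)-\phi(x)}-1]$, so $\sup_\phi\inf_Q L=\hat I(\mu)$. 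The principal obstacle is justifying the exchange of inf and sup, since the optimizer in the Donsker--Varadhan formula need not be attained in $C(E)$; however $L$ is convex in $Q$ and affine, hence concave, in $\phi$, and the goodness of $I$ supplied by Theorem~\ref{t:ldp} confines the minimizing $Q$ to a compact convex subset of $\mc M_+(E\times E)$, so Sion's minimax theorem applies and concludes the proof.
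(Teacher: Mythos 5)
Your contraction step and the inequality $\tilde I\ge\hat I$ are correct and match the paper's argument (the paper phrases the easy inequality via the variational characterization $I(\mu,Q)=\sup_{\phi,F}I_{\phi,F}(\mu,Q)$ from Lemma~\ref{t:rl} rather than via Young's inequality, but these are the same computation). The genuine gap is in your justification of the minimax identity. You claim that because $I(\mu,\cdot)$ is good, the minimizing $Q$ is confined to a compact convex set $\mc K$, and that Sion's theorem then ``applies and concludes the proof.'' This does not follow. Applying Sion with the $Q$-variable restricted to $\mc K$ yields
\begin{equation*}
\inf_{Q\in\mc K}\sup_{\phi}L(\mu,Q,\phi)=\sup_{\phi}\inf_{Q\in\mc K}L(\mu,Q,\phi),
\end{equation*}
and the left-hand side equals $\tilde I(\mu)$ since the minimizer lies in $\mc K$. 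But on the right-hand side the inner infimum is now taken over $\mc K$ only, and for a given $\phi$ the unconstrained minimizer $Q^\phi(dx,dy)=\mu(dx)c(x,dy)e^{\phi(y)-\phi(x)}$ generically lies outside $\mc K$ (its total mass, and the value $\iint\mu c\,\Psi(e^{\phi(y)-\phi(x)})$, blow up with $\|\phi\|$). Hence $\inf_{Q\in\mc K}L(\cdot,\phi)\ge\inf_{Q}L(\cdot,\phi)$ with possible strict inequality, so all Sion-on-$\mc K$ gives is $\tilde I(\mu)=\sup_\phi\inf_{Q\in\mc K}L\ge\hat I(\mu)$---the inequality you already had. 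It does not produce $\tilde I\le\hat I$.

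The paper sidesteps this precisely because neither side is compact: it invokes the noncompact generalization of Sion's theorem due to Ha~\cite{H}, with a coercivity-type hypothesis that is checked by hand. Concretely, the paper works with the Lagrangian $\Gamma_\mu(Q,\phi,F)=Q^{(1)}(\phi)-Q^{(2)}(\phi)+Q(F)-\mu(r^F-r)$, which is \emph{affine} (not merely convex) in $Q$, and verifies Ha's condition by exhibiting the singleton $K=\{(0,1)\}$ and a total-variation ball $\mc H$ such that $\inf_Q\sup_{\phi,F}\Gamma_\mu\le\inf_{Q\notin\mc H}\sup_{(\phi,F)\in K}\Gamma_\mu$; this uses the observation $\Gamma_\mu(Q,0,1)=Q(1)-(e-1)\mu(r)$, which is large when $Q(1)$ is large, together with the a priori bound $\inf_Q\sup_{\phi,F}\Gamma_\mu\le I(\mu,0)=\mu(r)$. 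Your Lagrangian $L$ could conceivably be treated the same way (the superlinearity of $\Psi$ furnishes a similar coercivity), but you would still need to invoke and verify a noncompact minimax criterion such as Ha's; the simple ``confine to a compact sublevel set'' reduction does not close the argument.
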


As a further projection of Theorem \ref{t:ldp}, we  obtain a variational expression, that appears to be new, 
of the rate function for the empirical flow.

\begin{corollary}
  \label{t:ldem2}
  Let $\tilde I \colon \mc M_+(E\times E)\to [0,+\infty]$ be the functional defined by 
  \begin{equation*}
    \tilde I (Q) =
     \begin{cases}
\displaystyle\sup_{\alpha\in (-r_{\mathrm m},\,+\infty)} \Big\{ \iint \!Q(dx,dy) \, 
    \log \Big[ \frac{Q(dx,dy)}{Q(dx, E)c(x,dy)}(r(x)+\alpha)   \Big]\; -\alpha\Big\}\vspace{0.2cm}\\
\qquad\qquad \qquad  \qquad \mathrm{if }\;Q(\cdot, E)=Q(E, \cdot)\vspace{0.2cm}\\
+\infty \qquad\qquad \qquad\mathrm{otherwise},
 \end{cases}
\end{equation*}
where $r_{\mathrm m}:=\min r$.
As $T\to +\infty$ the family $\big\{\bb P_x\circ Q_T^{-1}\big\}_{T>0}$
  satisfies, uniformly with respect to $x$ bounded away from $E_0$, a
  large deviation principle with convex rate function $\tilde I$.
\end{corollary}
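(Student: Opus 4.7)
Apply the contraction principle to Theorem~\ref{t:ldp} along the continuous projection $(\mu, Q) \mapsto Q$. Since $I$ is a good convex rate function, the contracted functional
\[ \tilde I(Q) = \inf_{\mu \in \mc M_1(E)} I(\mu, Q) \]
is itself a good convex rate function (convexity being inherited from the joint convexity of $I$), and the LDP for $\{\bb P_x \circ Q_T^{-1}\}_{T>0}$ inherits uniformity on compact subsets of $E \setminus E_0$. The substance of the proof is therefore the explicit computation of this infimum and its identification with the stated supremum formula.

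The infimum is $+\infty$ unless $Q$ has equal marginals $Q_1(\cdot) := Q(\cdot, E) = Q(E, \cdot)$, so assume this holds. The Radon--Nikodym chain rule factors the density of $Q$ with respect to $\mu(dx) c(x, dy)$ as $[dQ/d(Q_1(dx) c(x, dy))] \cdot [dQ_1/d\mu]$, producing the splitting
\[ I(\mu, Q) = \iint Q(dx,dy)\, \log\frac{Q(dx,dy)}{Q_1(dx)\, c(x,dy)} + H(Q_1 \,|\, \mu) + \mu(r) - Q_1(E), \]
in which only $H(Q_1 \,|\, \mu) + \mu(r)$ depends on $\mu$. For any $\alpha > -r_{\mathrm m}$, the pointwise inequality $\Phi(a,b) \ge 0$ applied with $a = dQ_1/d\lambda$ and $b = (r+\alpha)\, d\mu/d\lambda$, integrated against $\lambda$ and combined with $\mu(E) = 1$, yields the Legendre-type lower bound
\[ H(Q_1 \,|\, \mu) + \mu(r) \ge \int Q_1(dx)\, \log(r(x) + \alpha) + Q_1(E) - \alpha, \]
so $\inf_\mu \ge \sup_\alpha$ of the right-hand side.

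For the reverse inequality, the equality case of $\Phi$ is realized by $\mu^*_\alpha(dx) := (r(x)+\alpha)^{-1} Q_1(dx)$; since $\alpha \mapsto \mu^*_\alpha(E)$ is continuous and strictly decreasing, the normalization $\mu^*_{\alpha^*}(E) = 1$ generically selects an interior $\alpha^*$, at which the Young bound becomes an equality. If instead $\mu^*_\alpha(E) < 1$ for every admissible $\alpha$ (a situation possible only when $E_0 \ne \emptyset$), augment $\mu^*_\alpha$ by an atom of residual mass at some $x_0 \in E_0$: because $r(x_0) = 0$ and $Q_1(E_0) = 0$, this atom alters neither $\mu(r)$ nor $H(Q_1 \,|\, \mu)$, and the supremum is recovered in the limit $\alpha \to -r_{\mathrm m}^+ = 0^+$. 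Substituting back into the splitting and absorbing the identity $\iint Q(dx,dy)\log(r(x)+\alpha) = \int Q_1(dx)\log(r(x)+\alpha)$ into the logarithm inside the integrand reproduces the announced formula for $\tilde I$. The principal obstacle is precisely the boundary case just described: showing that the Young-optimal $\mu^*_\alpha$ delivers the infimum (possibly only in a limiting sense) when it cannot be normalized to a probability, which forces one to exploit the absorbing structure of $E_0$ together with the integrability afforded by Assumption~\ref{t:1.1}.
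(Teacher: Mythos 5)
Your overall plan—contraction onto $Q$ and explicit computation of $I_2(Q)=\inf_\mu I(\mu,Q)$—is exactly the paper's strategy. Your lower bound $I_2\ge\tilde I$ takes a slightly different route: you split $I(\mu,Q)$ via the Radon--Nikodym chain rule and then apply the pointwise Young inequality $\Phi\ge0$, whereas the paper plugs the specific test function $F=\log\big[\tfrac{Q(dx,dy)}{Q(dx,E)c(x,dy)}(r(x)+\alpha)\big]$ into the variational characterization of Lemma~\ref{t:rl} and computes $Q^\mu(e^F-1)=\alpha$. Both are valid and essentially equivalent in content; yours buys a clearer separation of the $\mu$-dependent part, while the paper's has the advantage of not needing the explicit $\Phi$-splitting.

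There is, however, a genuine gap in your treatment of the boundary case of the reverse inequality. You assert that $\mu^*_\alpha(E)<1$ for all admissible $\alpha$ is ``a situation possible only when $E_0\ne\emptyset$'' and then place the residual atom at some $x_0\in E_0$, using $r(x_0)=0$. Neither is correct in general: the corollary is stated for arbitrary $E_0$, which may be empty (then $r_{\mathrm m}>0$), and even then $\int Q(dx,E)/(r(x)-r_{\mathrm m})$ can be $\le 1$ (e.g.\ whenever $Q$ is small, such as $Q=0$). The correct choice, which the paper makes, is to place the atom at any minimizer $x_0$ of $r$, i.e.\ $r(x_0)=r_{\mathrm m}$, not at a zero of $r$. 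The singular part then contributes $\mu_{\mathrm s}(r)=(1-\beta)r_{\mathrm m}$ with $\beta=\int Q(dz,E)/(r(z)-r_{\mathrm m})$, which combines with the absolutely continuous part to give exactly $\iint Q\log\big[\tfrac{Q}{Q(\cdot,E)c}(r-r_{\mathrm m})\big]+r_{\mathrm m}$, recovered from the supremum by monotone convergence as $\alpha\downarrow -r_{\mathrm m}$. When $r_{\mathrm m}=0$ your argument reduces to this (the atom at $r_{\mathrm m}$ is then at a point of $E_0$), but as written it does not cover $E_0=\emptyset$.
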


\section{Law of large numbers}
\label{sec3}
We denote with $\{Z_i\}_{i\geq 0}$ the skeleton of the process $\xi$,
namely the sequence of the visited states, 
and with $\{\tau_i\}_{i\geq 0}$ the collection of the holding times.
The skeleton $\{Z_i\}_{i\geq 0}$ is a Markov chain with transition
probability $p(x, dy)$.  
Conditioned to the skeleton $\{Z_i\}_{i\geq 0}$, 
$\{\tau_i\}_{i\geq 0}$ are independent, exponentially distributed
random variables with parameters $r(Z_i)$.  
In particular they have the same law as 
$\{ r(Z_i)^{-1}e_i\}_{i\geq 0}$, where $\{e_i\}_{i\geq 0}$ are i.i.d.\ 
exponential random variables with parameter $1$.

We denote with $T_n$, $n\geq0$, the jump times
$T_n:=\sum_{i=0}^{n-1}\tau_i$ for $n\ge 1$ and $T_0=0$.  
We then define the clock process $\mc T (t):=T_{\lfloor t \rfloor}$, where
$\lfloor\cdot\rfloor$ denotes the integer part.  
The inverse function $n(t):=\inf\{n:\;T_n\geq t\}$ gives the number of
jumps up to time $t$.
By definition, the following inequality holds
\begin{equation}
  \label{ineq:Tn}
  T_{n(t)-1}<t\leq T_{n(t)},
\end{equation}
where we take $T_{-1}=0$.

\begin{proposition}
  \label{prop:lln} 
  Let $\pi_0\in\mc M_1(E)$ be the unique invariant measure of the
  chain $\{Z_i\}$. Then for each $f\in C(E)$ and $Z_0\in
  E\setminus{x_0}$
  \begin{equation}
    \label{eq:lln}
    \lim_{n\to\infty}\frac{1}{n}\sum_{i=0}^{n-1}f(Z_i)\tau_i= \pi_0(f/r)
    \qquad \textrm{in probability.}
  \end{equation}  
  Moreover, the convergence is uniform with respect to
  $Z_0$ in a compact subset of $E\setminus E_0$.
\end{proposition}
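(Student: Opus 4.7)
The plan is to exploit the exponential representation $\tau_i=e_i/r(Z_i)$ with $\{e_i\}_{i\geq 0}$ i.i.d.\ $\mathrm{Exp}(1)$ independent of the skeleton, rewrite the sum as $\frac{1}{n}\sum_{i=0}^{n-1} g(Z_i)e_i$ with $g:=f/r$, and handle the degeneracy of $g$ near $E_0$ by a truncation argument. The main difficulty is precisely this unboundedness: a naive ergodic theorem for the skeleton does not apply to $g$, but one has the global integrability $\pi_0(|g|)\leq\|f\|\,\pi_0(1/r)<+\infty$ and $\lambda(|g|)\leq\|f\|\,\lambda(1/r)<+\infty$ from Assumption~\ref{t:1.1}(iii). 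Since $Z_0\in E\setminus E_0$, $p(x,\cdot)\ll\lambda$ and $\lambda(E_0)=0$, all $Z_i$ almost surely lie in $E\setminus E_0$, so $g(Z_i)$ is finite.

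Assuming without loss of generality $f\geq 0$, introduce the truncation $g_M:=g\wedge M$ and split
\[
\frac{1}{n}\sum_{i=0}^{n-1} g(Z_i)e_i \;=\; \frac{1}{n}\sum_{i=0}^{n-1} g_M(Z_i)e_i \;+\; \frac{1}{n}\sum_{i=0}^{n-1} (g-g_M)(Z_i)e_i.
\]
For the truncated term, condition on the skeleton and write it as $\frac{1}{n}\sum g_M(Z_i)+\frac{1}{n}\sum g_M(Z_i)(e_i-1)$. The second piece has conditional mean zero and conditional variance at most $M^2/n$, hence vanishes in $L^2$. For the first piece, by Assumption~\ref{t:1.1}(ii) the kernel $p$ has a continuous strictly positive density on the compact $E\times E$, so it satisfies the Doeblin condition; by \cite[Thm.~16.0.2]{MT} the skeleton is uniformly ergodic with geometric rate, $\sup_x\|p^i(x,\cdot)-\pi_0\|_{\mathrm{TV}}\leq C\rho^i$ for some $\rho\in(0,1)$. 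This controls both $|\mathbb{E}[g_M(Z_i)\mid Z_0]-\pi_0(g_M)|$ and the covariances $\mathrm{Cov}(g_M(Z_i),g_M(Z_j))$ exponentially in $|i-j|$, yielding $L^2$-convergence of $\frac{1}{n}\sum g_M(Z_i)$ to $\pi_0(g_M)$ uniformly in $Z_0\in E$.

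The key remaining step is to make the tail uniformly negligible as $M\to\infty$. Since $p^i(Z_0,\cdot)$ has density with respect to $\lambda$ bounded by $\|p\|_\infty$ for every $i\geq 1$ (write $p^i(Z_0,dy)=\int p^{i-1}(Z_0,dz)\,p(z,y)\lambda(dy)$), taking expectation over the $e_i$ (which are independent with mean $1$) gives
\[
\mathbb{E}\Bigl[\frac{1}{n}\sum_{i=0}^{n-1}(g-g_M)(Z_i)e_i \,\Big|\, Z_0\Bigr] \;\le\; \|p\|_\infty\,\lambda(g-g_M)\;+\;\frac{g(Z_0)}{n}.
\]
On any compact $E_1\subset E\setminus E_0$ the function $g$ is bounded, so the second term is uniformly $o(1)$ in $n$ (in fact zero once $M\ge\sup_{E_1}g$); and $\lambda(g-g_M)\to 0$ as $M\to\infty$ by dominated convergence using $\lambda(g)<+\infty$. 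Markov's inequality combined with $\pi_0(g_M)\to\pi_0(g)=\pi_0(f/r)$ as $M\to\infty$ then gives the claimed convergence in probability, uniform in $Z_0\in E_1$. The only nontrivial point is the interplay between the three limits $n\to\infty$, $M\to\infty$ (and the independence of the bound from $Z_0\in E_1$); once the bounds above are in place this is routine: given $\varepsilon>0$, first fix $M$ so $\|p\|_\infty\lambda(g-g_M)<\varepsilon/2$ and $M\geq\sup_{E_1}g$, then let $n\to\infty$ in the truncated term.
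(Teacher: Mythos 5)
Your proof is correct and rests on the same two pillars as the paper's argument: truncation of $g=f/r$ near $E_0$, integrability $\pi_0(1/r),\lambda(1/r)<\infty$ to control the tail, and uniform geometric ergodicity of the skeleton (via Doeblin and \cite[Thm.~16.0.2]{MT}) to control correlations along the chain. The route differs in two respects worth noting. First, the paper uses an $n$-dependent truncation at level $n^{1/4}$, carefully tuned so that both the bounded part and the tail can be handled by a single limit $n\to\infty$ (the exponent must lie in $(0,1/2)$ for the single-variance term $n^{-2}\sum_{i<n}\bb E(f(Z_i)u^<_{n,i}e_i)^2 \lesssim n^{2\alpha-1}$ to vanish while $n^\alpha\to\infty$ kills the tail). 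You instead use a fixed truncation $g_M:=g\wedge M$, which is conceptually simpler but requires the double limit $n\to\infty$ then $M\to\infty$; your closing paragraph papers over the fact that $C_n:=n^{-1}\sum(g-g_M)(Z_i)e_i$ does \emph{not} tend to $0$ for fixed $M$, so the Markov inequality only shows $\varlimsup_n\bb P(C_n>\epsilon/4)\le 4\|p\|_\infty\lambda(g-g_M)/\epsilon$, and one must then let $M\to\infty$ \emph{after} $n\to\infty$ to conclude. This is routine, but as written the order of quantifiers is slightly off and deserves a sentence. Second, you decompose the truncated term into signal $n^{-1}\sum g_M(Z_i)$ and noise $n^{-1}\sum g_M(Z_i)(e_i-1)$, which cleanly separates the exponential fluctuations from the Markov-chain fluctuations; the paper instead keeps $e_i$ glued to the skeleton throughout and uses $\bb E(e_i e_j)=1$ for $i\ne j$ in the covariance estimate (after first replacing $u^<_{n,i}$ by $r(Z_i)^{-1}$ inside the cross terms, justified by the tail bound). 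Both choices are valid; your separation is arguably more transparent, while the paper's $n$-dependent truncation spares the double limit.
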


Postponing the proof above statement, we first show that it implies 
the law of large numbers for the continuous time chain $\xi$.

\begin{proof}[Proof of Theorem \ref{t:lln}]
  Recalling the definition of the empirical measure \eqref{em},
  \begin{equation*}
    \mu_T(f)=\frac 1{T}\sum_{i=0}^{n(T)}f(Z_i)\tau_i.
  \end{equation*}
  For each $f\in C(E)$, $\epsilon>0$, and $\sigma>0$
  \begin{equation}\begin{split}
      \label{eq:lln0}
      & \bb P_x \Big( \big|\mu_T(f) - \pi(f)\big|>\epsilon\Big)
      \leq  
      \bb P_x\Big( \Big|\frac 1{T} n(T)
      - \frac 1{\pi_0(1/r)}\Big|>\sigma\Big)
      \\ 
      &\qquad \qquad 
      + \bb P \Big( \big|\mu_T(f) - \pi(f)\big|>\epsilon,
      \; \Big|\frac 1{T} n(T)- \frac 1{\pi_0(1/r)}\Big|\leq\sigma \Big).
    \end{split}
  \end{equation}

  From \eqref{eq:lln} with $f=1$, we deduce that the sequence
  $\{T_n/n\}_{n\geq 1}$ converges in probability to $\pi_0(1/r)$ and
  therefore, in view of \eqref{ineq:Tn}, the family of random
  variables $\{n(T)/T\}_{T> 0}$ converges in probability to
  $\big(\pi_0(1/r)\big)^{-1}$. This implies that the first term on the
  right hand side of \eqref{eq:lln0} vanishes as $T\to\infty$.

  On the other hand, on the event $\big\{\big|\frac 1{T} n(T)- \frac
  1{\pi_0(1/r)}\big|\leq\sigma\big\}$,
  \begin{equation*}
    \Big\vert \mu_T(f)-\frac1{T}\sum_{i=0}^{\alpha(T)}f(Z_i)\tau_i\Big\vert
    \leq \|f\| \, \frac{1}{T}
    \sum_{i=\alpha(T)-\lfloor\sigma T\rfloor -1}^{\alpha(T)+\lfloor 
      \sigma T\rfloor +1}\tau_i,
  \end{equation*}
  where $\alpha(t)=\lfloor t/\pi_0(1/r)\rfloor$.  In view of condition
  (ii) in Assumption \ref{t:1.1}, for each $i\geq 1$ and $Z_0\in E$
  \begin{equation*}
    \bb E \big( \tau_i\big)=\int p^{i-1}(Z_0, dx)\int p(x, dy)\frac 1 {r(y)}
    \leq C\int p^{i-1}(Z_0, dx)\int \lambda(dy)\frac 1 {r(y)},
  \end{equation*}
  where $C=\max p(x,y)$. By condition (iii) in Assumption \ref{t:1.1},
  we thus get
  \begin{equation}
    \label{etaui}
    \sup_{i\geq 1}\: \bb E \big( \tau_i\big)<+ \infty.
  \end{equation}
  Hence, by Chebychev inequality, the second term on the right hand
  side of \eqref{eq:lln0} vanishes as $\sigma\to 0$ uniformly in $T$.
\end{proof}

\begin{proof}[Proof of Proposition \ref{prop:lln}]
  Recalling that, conditionally on $\{Z_i\}$,  $\{\tau_i\}$ have the same law as
  $\{r(Z_i)^{-1}e_i\}$, we set $S_n(f)=\frac
  1{n}\sum_{i=0}^{n-1}f(Z_i)r(Z_i)^{-1}e_i$ and define
  \begin{equation*}
    u^<_{n,i}:=\frac1{r(Z_i)}\id_{\{r(Z_i)^{-1}\leq n^{1/4}\}},
    \qquad
    u^>_{n,i}:=\frac1{r(Z_i)}\id_{\{r(Z_i)^{-1}> n^{1/4}\}}.
  \end{equation*}
  We decompose accordingly
  \begin{equation}\label{Sn}
    S_n(f)=\frac1 n f(Z_0)r(Z_0)^{-1}e_0+S_n^<(f)+S_n^>(f),
  \end{equation}
  where
  \begin{equation*}
    \begin{split}
      S_n^< (f)& := \frac{1}{n}\sum_{i=1}^{n-1}f(Z_i)u^<_{n,i}e_i,\\
      S_n^> (f)& := \frac{1}{n}\sum_{i=1}^{n-1}f(Z_i)u^>_{n,i}e_i.
    \end{split}
  \end{equation*}

  Trivially, the first term on the r.h.s.\ of \eqref{Sn} vanishes as
  $n\to\infty$, uniformly with respect to $Z_0$ in a compact subset of
  $E\setminus E_0$.  Let us next show that $S_n^>(f) $ converges
  to zero in $L^1$. We have
  \begin{equation*}
    \bb E\big( |S_n^>(f)|\big) \leq \frac 1 {n}\, \|f\|
    \, \sum_{i=1}^{n-1}\bb E\big(u^>_{n,i}\big), 
  \end{equation*}
  where, by Chapman-Kolmogorov,
  \begin{equation*}
    \begin{split}
      &\frac 1{n} \, \|f\| \, \sum_{i=1}^{n-1}
      \int \! p^{i-1}(Z_0, dx)\int\!  p(x,dy)\;
      \frac 1{r(y)}\id_{\{r(y)^{-1}> n^{1/4}\}}
      \\ 
      &\qquad\quad \leq \|f\|\, \sup_{x\in E}
      \int \! p(x, dy)\, 
      \frac 1{r(y)}\id_{\{r(y)^{-1}> n^{1/4}\}}, 
    \end{split}
  \end{equation*}
  which vanishes as $n\to\infty$ by conditions (ii) and (iii) in
  Assumption \ref{t:1.1}.

  We next show that $S_n^<(f)$ converges to $\pi_0(f/r)$ in $L^2$.
  We have
  \begin{equation}
    \label{S<1}
    \begin{split}
      & \bb E\big(S_n^<(f)-\pi_0(f/r) \big)^2 \leq 
      \frac 1 {n^2}\sum_{i=1}^{n-1} 
      \bb E\Big( f(Z_i)u^<_{n,i}e_i -\pi_0(f/r)\Big)^2
      \\ 
      &\qquad  +\frac 2 {n^2}\sum_{1\leq i<j\leq n-1} 
      \bb E\Big( \big[f(Z_i)u^<_{n,i}e_i -\pi_0(f/r)\big]
      \big[f(Z_j)u^<_{n,j}e_j -\pi_0(f/r)\big] \Big).
    \end{split}
  \end{equation}
  By definition of $u^<_{n,i}$, 
  $\bb E \big(f(Z_i)u^<_{n,i}e_i\big)^2\leq \|f\|^2 n^{1/2}$. 
  Therefore the first sum on the right hand side of \eqref{S<1}
  vanishes as $n\to\infty$.
  In order to estimate the second sum on the r.h.s.\ of \eqref{S<1}, we
  observe that for each $1\leq i<j\le n-1$
  \begin{equation*}
    \begin{split}
      & \Big|\bb E
      \Big( f(Z_i)\big[u^<_{n,i}-r(Z_i)^{-1}\big]
      \, f(Z_j) \big[u^<_{n,j}-r(Z_j)^{-1}\big] \Big)
      \\  
      &\qquad \quad 
      \leq \|f\|^2 \, \Big(\sup_{x\in E}
      \int \!p(x, dy)\frac 1{r(y)}\id_{\{r(y)^{-1}> n^{1/4}\}}\Big)^2,
    \end{split}
  \end{equation*}
  which vanishes as $n\to\infty$ by conditions (ii) and (iii) in Assumption
  \ref{t:1.1}.  Therefore we can replace $u^<_{n,i}$ by $r^{-1}(X_i)$
  in the second term of the r.h.s. of \eqref{S<1}.
  By the same computations presented above, we get that 
  each term in this modified sum is uniformly bounded, that is
  \begin{equation}\label{upES}
    \Big| \bb E\Big( \big[ f(Z_i)r(Z_i)^{-1}e_i -\pi_0(f/r)\big]
    \big[f(Z_j) r(Z_j)^{-1}e_j -\pi_0(f/r)\big] \Big) \Big| 
    \le C^2\, \|f\|^2,
  \end{equation}
  where $C:=\sup_{x\in E} \int\! p(x,dy)\, 1/r(y)$.  
  Given $m<n-1$, we now split the sum
  \begin{equation*}
    \sum_{1\leq i<j\leq n-1}=\sum_{\substack{1\leq i<m\\i< j\leq n-1}}
    +\sum_{\substack{m\leq i<j\leq n-1\\ j-i\leq m}}
    +\sum_{\substack{m\leq i<j\leq n-1\\ j-i> m}}.
  \end{equation*}
  By \eqref{upES}, the first and the second sum give a contribution of
  order $m/n$, then it remains to estimate the last sum.
  Since $\pi_0(\cdot) =\int\!\pi_0(dx) p(x,\cdot)$, for $\ell\geq 0$
  we can write
  \begin{equation}
    \label{S<2}
    \begin{split}
      &\Big|\int\! p^{\ell+1}(x,dy) \, \frac {f(y)}{r(y)}
      -\pi_0(f/r)\Big|
            = \Big| \int \!\big(p^{\ell}(x,dz)-\pi_0(dz)\big)
      \int\! p(z, dy) \, \frac {f(y)} {r(y)}\Big|
      \\
      &\qquad\qquad \leq C\, \| f\|\, 
        \, \sup_{x\in E}\|p^{\ell}(x,\cdot)-\pi_0(\cdot)\|_{\mathrm{TV}}.
    \end{split}
  \end{equation}
  Therefore
  \begin{equation*}
    \begin{split}
      &\frac 1 {n^2}\sum_{\substack{m\leq i<j\leq n-1\\ j-i> m}} 
      \Big| \bb E\Big( 
      \big[f(Z_i)r(Z_i)^{-1} e_i -\pi_0(f/r)\big]
      \big[f(Z_j) r(Z_j)^{-1}e_j -\pi_0(f/r)\big] 
      \Big)\Big| \\
      &\qquad \quad
      \leq C^2\, \| f\|^2\,
      \sup_{\ell\geq m} \Big(
      \sup_{x\in E} \|p^{\ell}(x,\cdot)-\pi_0(\cdot) \|_\mathrm{TV} \Big)^2.
    \end{split}
  \end{equation*}
  Since condition (ii) in Assumption~\ref{t:1.1} implies the Doeblin
  condition for $p$, the uniform ergodicity of the chain, see e.g., \cite[Thm.
  16.0.2]{MT}, implies that the r.h.s.\ above vanishes as
  $m\to\infty$.
  We then conclude the proof taking the limit $n\to \infty$ and then
  $m\to\infty$.
\end{proof}

\section{Large deviations upper bound}
\label{s:ub}

We denote the marginals of $Q\in \mc M_+(E\times E)$ by $Q^{(1)}$ and
$Q^{(2)}$. For $F\in C(E\times E)$ we let $r^F\colon E\to \bb R_+$ be
the continuous function defined by 
\begin{equation}
  \label{def:rf}
  r^F(x):=\int c(x, dy)\,e^{F(x,y)};
\end{equation}
observing that for $F=0$ we get $r^0=r$.
Given $\phi\in C(E)$ and $F\in C(E\times E)$ let $I_{\phi,F}\colon \ms
M\to \bb R$ be the continuous affine map defined by
\begin{equation}
  \label{IF}
  I_{\phi,F}(\mu,Q) :=
  Q^{(1)}(\phi)-Q^{(2)}(\phi)+  Q(F) - \mu\big(r^F-r\big).
\end{equation}
In this section we first prove, by an exponential tilt of the
underlying probability, the large deviation upper bound with rate
function $\sup_{\phi,F}I_{\phi,F}$. 
As in \cite{BFG}, this step can be easily accomplished since we are
considering the joint deviations of the empirical measure and flow. We
then show that the rate function thus obtained coincides with
\eqref{I}.  We remark that the upper bound estimate holds uniformly
with respect to all initial conditions in $E$.

\begin{proposition}
  \label{t:ub}
  As $T\to +\infty$ the family $\big\{\bb P_x\circ (\mu_T,
  Q_T)^{-1}\big\}_{T>0}$ satisfies, uniformly with respect to $x\in
  E$, a large deviation upper bound with lower semi-continuous convex
  rate function $\sup_{\phi,F}I_{\phi,F}$. Namely, for each closed
  $C\subset \ms M$
  \begin{equation*}
    \varlimsup_{T\to+\infty}\sup_{x\in E}
    \frac 1T \log \bb P_x\big( (\mu_T,Q_T)\in C \big)
    \le - \inf_{(\mu,Q)\in C} \sup_{\phi,F}I_{\phi,F}(\mu,Q)
  \end{equation*}
  where the supremum is carried out over all 
  $(\phi,F)\in C(E)\times C(E\times E)$.
\end{proposition}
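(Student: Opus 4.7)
I will follow the standard exponential tilting / Chebyshev strategy, of the type used in \cite{BFG} for countable state space, now adapted to the compact continuous setting.

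\textbf{Step 1: exponential martingale.} For each $\phi\in C(E)$ and $F\in C(E\times E)$, I will show that the process
\begin{equation*}
N_t^F := \exp\Bigl\{\sum_{s\le t:\,X_{s^-}\ne X_s}\! F(X_{s^-},X_s)
 - \int_0^t\![r^F(X_s)-r(X_s)]\,ds\Bigr\}
\end{equation*}
is a genuine $\bb P_x$-martingale with $N_0^F=1$. The local-martingale property follows from Dynkin's formula applied to the pure-jump semigroup; to upgrade to a true martingale it suffices that $\|r^F-r\|<+\infty$, which is immediate since $E$ is compact, $r$ is continuous, and $r^F(x)=r(x)\int p(x,dy)e^{F(x,y)}\le r(x)e^{\|F\|}$. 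Rewriting the exponent in terms of the empirical flow and measure and including the deterministic telescoping identity $e^{\phi(X_0)-\phi(X_T)}=\exp\{T[Q_T^{(1)}(\phi)-Q_T^{(2)}(\phi)]\}$ yields
\begin{equation*}
M_T^{\phi,F}\, :=\, e^{\phi(X_0)-\phi(X_T)}N_T^F \,=\, \exp\{T\, I_{\phi,F}(\mu_T,Q_T)\},\qquad \bb E_x[M_T^{\phi,F}]\le e^{2\|\phi\|}
\end{equation*}
uniformly in $x\in E$, the last bound coming from $e^{-\phi(X_T)}\le e^{\|\phi\|}$ and $\bb E_x[N_T^F]=1$.

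\textbf{Step 2: exponential tightness.} Specializing to $\phi=0$ and $F\equiv\gamma$ constant produces $\bb E_x\exp\{T\gamma\|Q_T\|_{\mathrm{TV}}\}\le\exp\{T(e^\gamma-1)\|r\|\}$, hence
\begin{equation*}
\sup_{x\in E}\bb P_x\big(\|Q_T\|_{\mathrm{TV}}\ge L\big)\le e^{-T\alpha(L)},\qquad \alpha(L)\to+\infty.
\end{equation*}
Since $\mc M_1(E)$ is already compact and the sets $K_L:=\mc M_1(E)\times B_L$ are compact in the bounded weak* topology on $\ms M$, this gives the required exponential tightness, uniformly in the initial condition.

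\textbf{Step 3: minimax / covering.} Given a closed $C\subset\ms M$, set $\ell<\inf_{C}\sup_{\phi,F} I_{\phi,F}$. Thanks to Step 2 it is enough to estimate $\bb P_x((\mu_T,Q_T)\in C\cap K_L)$ for $L$ large. For every $(\mu,Q)\in C\cap K_L$ pick $(\phi,F)$ with $I_{\phi,F}(\mu,Q)>\ell$; since $I_{\phi,F}$ is continuous and affine on $\ms M$ (the map $F\mapsto \mu(r^F-r)$ is continuous as $r^F-r$ is continuous in $x$ for each $F$, and linear in $Q$ for fixed $(\phi,F)$), this inequality persists on an open neighborhood. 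By compactness of $C\cap K_L$, finitely many such neighborhoods cover it; then Chebyshev with $M_T^{\phi_i,F_i}$ and a union bound yield
\begin{equation*}
\sup_{x\in E}\frac1T\log\bb P_x\big((\mu_T,Q_T)\in C\cap K_L\big)\le -\ell + \tfrac{1}{T}\log\bigl[(\#\text{cover})\cdot e^{2\max_i\|\phi_i\|}\bigr],
\end{equation*}
and letting $T\to\infty$, then $\ell\uparrow\inf_C\sup_{\phi,F}I_{\phi,F}$, gives the upper bound. Lower semi-continuity and convexity of $\sup_{\phi,F}I_{\phi,F}$ come for free from the continuity and affinity of each $I_{\phi,F}$.

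\textbf{Main obstacle.} The conceptual content is standard; the technical points I expect to need care are (a) justifying that $N_T^F$ is a genuine martingale, which reduces to exponential integrability of the jump count supplied by the bound in Step~2, and (b) the minimax step, where one must use continuity of $I_{\phi,F}$ on $\ms M$ in the bounded weak* topology rather than the weaker weak* topology — this is precisely the reason the bounded weak* topology was introduced in Section~\ref{definizioni}, since $Q\mapsto Q(F)$ is continuous there for all $F\in C(E\times E)$ but the bounded-weak* topology keeps balls $B_\ell$ compact.
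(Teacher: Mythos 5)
Your proof is correct and follows essentially the same route as the paper: the exponential martingale $\bb M^F_t=N_t^F$, the path-wise continuity identity turning $N_T^F$ into $\exp\{T\,I_{\phi,F}(\mu_T,Q_T)\}$ up to the boundary term $e^{\phi(X_0)-\phi(X_T)}$, exponential tightness via $F\equiv\gamma$, and then optimization over $(\phi,F)$ on compacts. The only difference in exposition is that the paper reduces Step~3 to a single Chebyshev estimate (Lemma~4.3: a bound by $\inf_{\mc B}I_{\phi,F}$ for each fixed $(\phi,F)$ and arbitrary measurable $\mc B$) and then invokes the min-max lemma of Kipnis--Landim, whereas you unfold that lemma explicitly as a finite covering of $C\cap K_L$; these are the same argument. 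One small imprecision in your closing remark: $Q\mapsto Q(F)$ is in fact weak*-continuous by definition (not only bounded-weak*-continuous); the role of the bounded weak* topology is rather that it makes $\mc M(E\times E)$ a complete, completely regular space with compact balls $B_\ell$, which is what your covering step uses.
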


We start by proving the exponential tightness,  that is there
exists a sequence $\{\mc K_\ell\}_{\ell\in \bb N}$ of compacts in 
$\ms M$ such that
\begin{equation*}
  \lim_{\ell\to\infty} \: \varlimsup_{T\to\infty} \: 
  \sup_{x\in E}\frac 1 T\, 
  \log \bb P_x\big( (\mu_T,Q_T) \notin \mc K_\ell\big)=-\infty.
\end{equation*}
Recall $\ms M=\mc M_1(E)\times \mc M_+(E\times E)$. Since $\mc M_1(E)$
is compact with respect to the topology of weak convergence and $\mc
M_+(E\times E)$ is endowed with the bounded weak* topology, the
previous bound follows from the exponential tightness of the sequence
of positive random variables $\{Q_T(1)\}_{T>0}$, which count the total
number of jumps per unit of time.

\begin{lemma}
  \label{t:et}
  Let $a_\ell\to +\infty$. Then
  \begin{equation*}
    \lim_{\ell\to\infty} \:\varlimsup_{T\to\infty} \:\sup_{x\in E}
    \frac 1T\,\log \bb P_x\big( Q_T(1)  >a_\ell \big) =-\infty.
  \end{equation*}
\end{lemma}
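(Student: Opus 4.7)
Since $Q_T(1)=N_T/T$ where $N_T:=\#\{t\in[0,T]:X_{t^-}\neq X_t\}$ counts the number of jumps in $[0,T]$, it suffices to obtain a uniform in $x$ Chebyshev-type bound on the exponential moments of $N_T$. The plan is to apply the standard exponential martingale associated to the counting process of jumps, exploiting that $r$ is bounded because $E$ is compact and $r$ is continuous (Assumption~\ref{t:1.1}(i)).

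Concretely, I would take $F\equiv\theta$ constant in the notation of \eqref{def:rf}, so that $r^F=e^\theta r$, and invoke the Dol\'eans-Dade (Dynkin) exponential formula to assert that
\begin{equation*}
M^\theta_t:=\exp\Bigl(\theta N_t-(e^\theta-1)\!\int_0^t\! r(X_s)\,ds\Bigr)
\end{equation*}
is a mean-one $\bb P_x$-martingale for every $\theta\in\bb R$ and every $x\in E$. The non-explosion of $\xi$, which is automatic since $r$ is bounded, ensures that the integrand is finite and the local martingale is a true martingale (for $\theta$ in a bounded set). Taking expectation and using $r\le\|r\|$ yields, for $\theta>0$,
\begin{equation*}
\bb E_x\bigl[e^{\theta N_T}\bigr]\le \exp\bigl(T\|r\|(e^\theta-1)\bigr),
\end{equation*}
uniformly in $x\in E$.

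I would then apply Chebyshev's inequality: for any $a>0$ and any $\theta>0$,
\begin{equation*}
\bb P_x\bigl(Q_T(1)>a\bigr)=\bb P_x\bigl(N_T>aT\bigr)\le \exp\bigl(T\bigl[\|r\|(e^\theta-1)-\theta a\bigr]\bigr),
\end{equation*}
so that
\begin{equation*}
\sup_{x\in E}\frac{1}{T}\log \bb P_x\bigl(Q_T(1)>a\bigr)\le \|r\|(e^\theta-1)-\theta a.
\end{equation*}
Optimizing in $\theta$ (the Legendre transform of $\|r\|(e^\theta-1)$ is $a\log(a/\|r\|)-a+\|r\|$) gives an upper bound that tends to $-\infty$ as $a\to\infty$. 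Applying this with $a=a_\ell\to+\infty$ concludes the proof.

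There is no real obstacle beyond writing down the exponential martingale correctly; the only point that deserves a line of justification is that the local martingale $M^\theta$ is a genuine martingale, which follows because $r$ is bounded so that the compensator $(e^\theta-1)\int_0^T r(X_s)ds$ is deterministically bounded and a routine truncation argument applies. Note that this estimate is uniform over all $x\in E$, including the absorbing states in $E_0$ (where the bound is trivial since $N_T\equiv0$).
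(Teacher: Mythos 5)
Your proof is correct and follows essentially the same approach as the paper: the martingale $M^\theta_t$ you write down is exactly the paper's $\bb M^F_t$ from \eqref{M} specialized to the constant function $F\equiv\theta$, since $tQ_t(\theta)=\theta N_t$ and $t\mu_t(r^\theta-r)=(e^\theta-1)\int_0^t r(X_s)\,ds$. The only cosmetic difference is that you optimize over $\theta$ to obtain the Legendre transform; the paper simply fixes $\gamma>0$ and lets $a=a_\ell\to\infty$, which already sends the bound to $-\infty$.
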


\begin{proof}
  Given $F\in C(E\times E)$, let $\bb M^F$ be the process defined by
  \begin{equation}\label{M}
    \bb M^F_t =\exp\big\{t\big[Q_t(F)-\mu_t(r^F -r) \big  ]\big\},
    \qquad t\in \bb R_+.
  \end{equation}
  By standard Markov chain computations, see e.g., \cite[\S VI.2]{Br}, $\bb
  M^F$ is a mean one positive $\bb P_x$ martingale, $x\in E$.
  By choosing $F(x,y)=\gamma >0$, $(x,y)\in E\times E$, for $a>0$,
  $T>0$ we then write
  \begin{equation*}
    \begin{split}
      \bb P_x\big(Q_T(1) >a \big) 
      &=  
      \bb E_x\big( e^{-T\{ \gamma Q_T(1) -\mu_T(r^\gamma-r)\}} \, 
      \bb M^F_T \, \id_{\{Q_T(1) >a\}}  \big)
      \\
      & 
      \leq e^{-T\gamma a } \, e^{T\|r\|(e^\gamma-1)}
      \, \bb E_x\big(\bb M^F_T \big) 
      = e^{-T\gamma a } \, e^{T\|r\|(e^\gamma-1)}.
    \end{split}
  \end{equation*}
  The statement follows.
\end{proof}

\begin{lemma}
  \label{t:pub}
  For each $(\phi,F)\in C(E)\times C(E\times E)$ and each measurable
  $\mc B\subset \ms M$,
  \begin{equation*}
    \varlimsup_{T\to \infty}\; \sup_{x\in E}\,\frac 1{T}
    \log \bb  P_{x} \Big( (\mu_{T},Q_{T}) \in \mc B \Big)
    \le -\inf_{(\mu,Q)\in \mc B}  I_{\phi,F} (\mu,Q).
  \end{equation*}
\end{lemma}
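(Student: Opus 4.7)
The plan is a direct exponential Chebyshev bound built on the mean one positive martingale $\bb M^F$ from \eqref{M}. The key preliminary observation is a pathwise telescoping identity for the $\phi$-dependent part of $I_{\phi,F}$: since $Q_T$ is supported on jump pairs $(X_{t^-},X_t)$, with first and second marginals $Q_T^{(1)}=\frac1T\sum_{\text{jumps}}\delta_{X_{t^-}}$ and $Q_T^{(2)}=\frac1T\sum_{\text{jumps}}\delta_{X_t}$, summing $\phi(X_{t^-})-\phi(X_t)$ over the jumps in $[0,T]$ collapses to
\[
T\bigl[Q_T^{(1)}(\phi)-Q_T^{(2)}(\phi)\bigr]=\phi(X_0)-\phi(X_T),
\]
which is bounded pathwise by $2\|\phi\|$, uniformly in $x$.

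Next I would rearrange the definition \eqref{IF} of $I_{\phi,F}$ and use the above identity to rewrite
\[
T\,I_{\phi,F}(\mu_T,Q_T)=T\bigl[Q_T(F)-\mu_T(r^F-r)\bigr]+\phi(X_T)-\phi(X_0)=\log\bb M^F_T+\phi(X_T)-\phi(X_0).
\]
Since $\bb M^F$ is a mean one $\bb P_x$-martingale (as already recalled in the proof of Lemma~\ref{t:et}), this identity, together with the uniform bound on the boundary term, yields
\[
\bb E_x\bigl[\exp\{T\,I_{\phi,F}(\mu_T,Q_T)\}\bigr]\le e^{2\|\phi\|}\qquad\text{for every }x\in E.
\]
An application of Markov's inequality on the measurable set $\mc B$ then gives
\[
\bb P_x\bigl((\mu_T,Q_T)\in\mc B\bigr)\le e^{2\|\phi\|}\exp\!\Bigl\{-T\inf_{(\mu,Q)\in\mc B}I_{\phi,F}(\mu,Q)\Bigr\},
\]
and taking $(1/T)\log$, the supremum over $x\in E$, and then $\varlimsup_{T\to\infty}$ concludes the argument.

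I do not expect a genuine obstacle at this lemma: the definition \eqref{IF} of $I_{\phi,F}$ has been engineered precisely so that the $\phi$-correction telescopes to a bounded boundary term while the $F$-correction is exactly the logarithm of the exponential martingale $\bb M^F$, which makes the one-shot exponential tilt work uniformly in the starting point $x$. The substantive work of Proposition~\ref{t:ub} lies not in this lemma but in the subsequent identification of $\sup_{\phi,F}I_{\phi,F}$ with the explicit Donsker--Varadhan-type functional $I$ of \eqref{I}, which is where the continuous state space and the degeneracy of $r$ genuinely enter the analysis.
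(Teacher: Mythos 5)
Your proof is correct and is essentially the paper's: your telescoping identity is exactly the path-wise continuity equation \eqref{pce}, and your exponential Markov bound built on $\bb M^F$ is precisely the paper's tilt, merely phrased via Chebyshev rather than by inserting the identity $\exp\{-T\,I_{\phi,F}(\mu_T,Q_T)-[\phi(X_T)-\phi(x)]\}\,\bb M_T^F=1$ directly inside $\bb E_x[\id_{\mc B}]$. One minor sign slip: after the rearrangement the boundary term should read $\phi(X_0)-\phi(X_T)$, not $\phi(X_T)-\phi(X_0)$, but this is immaterial since you bound it in absolute value by $2\|\phi\|$.
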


\begin{proof}
  Fix $x\in E$ and observe that the following path-wise continuity
  equation holds $\bb P_x$ a.s.,
  \begin{equation}
    \label{pce}
    \begin{split}
      0 & = \phi(X_T)-\phi(X_0) -  
      \sum_{t\in[0,T]}\big[\phi(X_{t})-\phi(X_t^-)\big]
      \\
      &=\phi(X_T)-\phi(X_0)- T \big[Q_T^{(2)}(\phi)-Q_T^{(1)}(\phi)\big].
    \end{split}
  \end{equation}
  In view of \eqref{IF} and \eqref{pce}, recalling the martingale
  introduced in \eqref{M}, for each $T>0$
  \begin{equation*}
    \begin{split}
      &\bb P_x \big( (\mu_T,Q_T) \in \mc B \big)
      \\
      &\qquad
      = \bb E_x \Big(
      \exp\big\{ -T \, I_{\phi,F} (\mu_T,Q_T) -
      \big[ \phi(X_T)-\phi(x) \big] \big\}
      \: \bb M_T^F \: \id_{\mc B}(\mu_T,Q_T) \Big)
      \\
      &\qquad \le
      \sup_{(\mu,Q)\in\mc B} e^{- T \, I_{\phi,F} (\mu,Q) }
      \; \bb E_x \Big(
      \exp\big\{- \big[\phi(X_T)-\phi(x)\big] \big\}
      \: \bb M_T^F \: \id_{\mc B}(\mu_T,Q_T) \Big)\\
      &\qquad \leq \sup_{(\mu,Q)\in\mc B} 
       e^{- T \, I_{\phi,F} (\mu,Q) }e^{2\|\phi\|},
    \end{split}
  \end{equation*}
   where in the last step we used $\bb E_x\big(\bb M_T^F\big)=1$.  The
   statement follows.  
\end{proof}

\begin{proof}[Proof of Proposition \ref{t:ub}]
  In view of the exponential tightness proven in Lemma~\ref{t:et}, it
  is enough to prove the upper bound for compacts.
  For each compact $\mc K\subset \ms M$, by  Lemma~\ref{t:pub} and the min-max lemma in \cite[App.~2,
  Lemma~3.3]{KL}
  \begin{equation*}
    \varlimsup_{T\to+\infty}\,\sup_{x\in E}
    \frac 1T \log \bb  P_{x} \Big( (\mu_T,Q_T) \in \mc K \Big)
    \le -\inf_{(\mu,Q)\in \mc K} \; \sup_{\phi,F} \; I_{\phi,F}(\mu,Q).
  \end{equation*}
 Finally,  as the map $(\mu,Q)\mapsto I_{\phi,F}(\mu,Q)$ is continuous and
  affine, the functional $\sup_{\phi,F} \; I_{\phi,F}$ is lower
  semi-continuous and convex.
\end{proof}

Recalling that the functional $I$ is defined in \eqref{I}, we show
that it coincides with $\sup_{\phi,F} \; I_{\phi,F}$.

\begin{lemma}
  \label{t:rl}
  For each $(\mu,Q)\in\ms M$,
  \begin{equation}\label{I=I}
    I(\mu,Q)= \sup_{\phi,F} I_{\phi,F}(\mu,Q).
  \end{equation}
  In particular, $I$ is lower semicontinuous and convex.  Moreover, if
  $I(\mu,Q) <+\infty$ then $Q\ll \lambda\times\lambda$ and \eqref{I1}
  holds.
\end{lemma}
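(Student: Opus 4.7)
The plan is to prove the variational identity by standard convex duality, with a Lusin-type density argument to pass from bounded measurable to continuous test functions.

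\textbf{Reduction.} First I would eliminate $\phi$. If $Q^{(1)}\neq Q^{(2)}$, pick $\phi\in C(E)$ separating them; taking $F=0$ and $t\phi$ in place of $\phi$, $I_{t\phi,0}(\mu,Q)=t[Q^{(1)}(\phi)-Q^{(2)}(\phi)]$ is unbounded as $t\to\pm\infty$, matching the $+\infty$ value of $I$. When $Q^{(1)}=Q^{(2)}$ the $\phi$-terms cancel, so the identity reduces to showing $\sup_{F\in C(E\times E)} I_{0,F}(\mu,Q)=\iint m\,\Psi(dQ/dm)$, with $m(dx,dy):=\mu(dx)c(x,dy)$ and the usual convention that the right-hand side is $+\infty$ when $Q\not\ll m$.

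\textbf{The main identity.} The Legendre transform of $\Psi(a)=a\log a-a+1$ is $\Psi^*(s)=e^s-1$, giving the pointwise Young inequality $aF\le\Psi(a)+(e^F-1)$. When $Q\ll m$ with density $q$, integrating this pointwise with $a=q(x,y)$, $F=F(x,y)$ against $m$ yields $I_{0,F}\le\iint\Psi(q)\,dm=I(\mu,Q)$; this also handles $Q\not\ll m$ trivially. For the matching lower bound, the classical duality for relative entropy gives $\iint\Psi(q)\,dm=\sup_{G}\{Q(G)-\iint(e^G-1)\,dm\}$ over bounded measurable $G$ (attained in the limit by truncating $G=\log q$). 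To pass from bounded measurable to continuous, given such a $G$ with $\|G\|_\infty\le M$ and $\eta>0$, Lusin's theorem on the compact Polish space $E\times E$ applied to the finite measure $m+Q$ produces $G_\eta\in C(E\times E)$ with $\|G_\eta\|_\infty\le M$ and $(m+Q)(\{G\neq G_\eta\})<\eta$; the bounds $|Q(G)-Q(G_\eta)|\le 2M\eta$ and $|\iint(e^{G}-e^{G_\eta})\,dm|\le 2e^M\eta$ then yield $I_{0,G_\eta}\to I_{0,G}$, so the supremum over continuous $F$ equals the supremum over bounded measurable $G$. When $Q\not\ll m$, inner and outer regularity produce a compact $K$ and an open $U_\delta\supset K$ with $Q(K)>0$, $m(K)=0$, $m(U_\delta)<\delta$, and an Urysohn function $f_\delta\in C(E\times E)$ equal to $1$ on $K$ and vanishing outside $U_\delta$; for $F=tf_\delta$, $I_{0,F}\ge tQ(K)-(e^t-1)\delta$, which becomes arbitrarily large after sending $\delta\to 0$ and then $t\to\infty$. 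The main obstacle is this Lusin approximation step; everything else is routine pointwise or regularity manipulation.

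\textbf{Semicontinuity and \eqref{I1}.} Since $r\in C(E)$ and $p\in C(E\times E)$, dominated convergence shows that $r^F$ defined in \eqref{def:rf} belongs to $C(E)$, so each $I_{\phi,F}$ is continuous and affine on $\ms M$, making $I=\sup_{\phi,F}I_{\phi,F}$ convex and lower semicontinuous. For the last assertion, decompose $\mu=\varrho\lambda+\mu_{\mathrm s}$ and accordingly $m=m_1+m_{\mathrm s}$, with $m_1:=\varrho(x)r(x)p(x,y)\lambda(dx)\lambda(dy)\ll\lambda\otimes\lambda$ and $m_{\mathrm s}:=r(x)p(x,y)\mu_{\mathrm s}(dx)\lambda(dy)$ singular with respect to $\lambda\otimes\lambda$. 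Since $I(\mu,Q)<\infty$ forces $Q=qm$, computing the marginals and imposing $Q^{(1)}=Q^{(2)}$ yields $r(x)\int q(x,y)p(x,y)\lambda(dy)=0$ for $\mu_{\mathrm s}$-a.e.\ $x$; thus either $x\in E_0$ (where $m_{\mathrm s}$ assigns no mass since $r=0$) or $q(x,\cdot)=0$ $\lambda$-a.e., so in either case $qm_{\mathrm s}=0$ and therefore $Q=qm_1\ll\lambda\otimes\lambda$ with density $\tilde q:=q\varrho rp$. Finally, writing $I(\mu,Q)=\iint\Psi(q)\,dm_1+\iint\Psi(0)\,dm_{\mathrm s}$, the first integral equals $\iint\Phi(\tilde q,\varrho rp)\,d\lambda\,d\lambda$ by direct algebra, while the second equals $m_{\mathrm s}(E\times E)=\mu_{\mathrm s}(r)$ since $\Psi(0)=1$ and $\int p(x,y)\lambda(dy)=1$, thereby recovering \eqref{I1}.
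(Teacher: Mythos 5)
Your proposal is correct and follows essentially the same route as the paper: eliminate $\phi$ by observing $\sup_\phi\{Q^{(1)}(\phi)-Q^{(2)}(\phi)\}<\infty$ iff $Q^{(1)}=Q^{(2)}$, prove the easy inequality $\sup_F I_{0,F}\le I$ via the pointwise Young inequality for $\Psi$, and prove the reverse by taking $F$ to be (a truncation of) $\log(dQ/dQ^\mu)$ and passing to the limit by monotone convergence. The only cosmetic difference is in the approximation step: the paper directly picks an equibounded array of continuous functions converging in $L^1(dQ^\mu)$ to the truncated log-density, whereas you approximate bounded measurable test functions by continuous ones via Lusin's theorem applied to $m+Q$; both handle the absolute continuity of $Q$ w.r.t.\ $Q^\mu$ and the singular part of $\mu$ in the same way to get \eqref{I1}.
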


\begin{proof}
  Clearly, $\sup_\phi \big\{ Q^{(1)}(\phi)-Q^{(2)}(\phi) \big\}
  <+\infty$ if and only if $ Q^{(1)}=Q^{(2)}$. For $\mu\in \mc M_1(E)$
  we denote by $Q^\mu\in \mc M_+(E\times E)$ the measure $Q^\mu(dx,dy)
  := \mu(dx)c(x,dy)$ and set $\Lambda(\mu,Q):=\sup_{F}\big\{ Q(F) -
  Q^\mu\big(e^F-1\big) \big\}$.  Recalling \eqref{def:rf} and
  \eqref{IF}, the proof of \eqref{I=I} is achieved once we show that if
  $Q^{(1)}=Q^{(2)}$ then $\Lambda(\mu,Q)=I(\mu,Q)$.

  For $Q$ with equal marginals we next prove that $\Lambda(\mu,Q)\le
  I(\mu,Q)$. We can assume $I(\mu,Q)<+\infty$ so that $Q\ll
  Q^\mu$. Then 
  \begin{equation*}
    Q(F) - Q^\mu\big(e^F-1\big)
    = \iint \!\! dQ^{\mu} \, 
    \Big\{ \frac{dQ}{dQ^{\mu}}\, F - \big( e^{F} -1 \big) \Big\}.
  \end{equation*}
  Since $\Psi(a)=\sup_{\lambda\in\bb R}\big\{ \lambda a -
  \big(e^\lambda-1\big)\big\}$, $a\in \bb R_+$, we complete this step
  by taking the supremum over $F$.

  To obtain the converse inequality, we first prove that if
  $\Lambda(\mu,Q) < +\infty$ then $Q\ll Q^\mu$.  Let $\tilde B$ be a
  Borel set in $E\times E$ such that $Q^\mu(\tilde B)=0$, we show that
  also $Q(\tilde B)=0$.  By regularity of the measure $Q^\mu$ there
  exists a sequence of open sets $A_n\supset \tilde B$ in $E\times E$
  such that $\lim_n Q^\mu(A_n ) =Q^\mu(\tilde B)=0$.  By approximating
  indicator of open sets with continuous functions we can take as test
  function $F=\gamma \id_{A_n}$, $\gamma>0$, and deduce
  \begin{equation*}
    \gamma \, Q(B) \le \gamma \, Q(A_n) \le 
    \Lambda (\mu,Q) +  \big(e^\gamma-1\big) \, Q^\mu(A_n).  
  \end{equation*}
  We conclude by taking first the limit as $n\to \infty$ and then
  $\gamma\to \infty$.
  To prove $\Lambda(\mu,Q)\ge I(\mu,Q)$ (for $Q$ with equal marginals)
  we can assume $\Lambda(\mu,Q)<+\infty$ so that $Q\ll Q^\mu$.
  Pick an array of continuous functions $\{F_{k,n}\}$
  equibounded in $n$ such that $\{F_{k,n}\}_{n\ge 0}$ converges to 
  $\log\big[\big(dQ/dQ^{\mu} \wedge k \big)\vee 1/k\big]$ in 
  $L^1(E\times E, dQ^\mu)$. Then
  \begin{equation*}
    \begin{split}
      \Lambda(\mu,Q) \ge \iint \!\!d Q^\mu
      \,\frac{dQ}{dQ^\mu}\log\Big[\Big( \frac{dQ}{dQ^{\mu}} \wedge k
      \Big)\vee \frac 1 k \Big] -\iint \!\!d Q^{\mu}\,
      \Big\{\Big[\Big(\,\frac{dQ}{dQ^{\mu}} \wedge k \Big)\vee \frac 1
      k\Big] \,-1\Big\}.
    \end{split}
  \end{equation*}
  By monotone convergence, we conclude taking the limit $k\to\infty$.

  To prove the last statement of the lemma, we decompose the measure
  $\mu$ 
into its absolutely continuous and singular parts  with respect to $\lambda$, i.e.\ 
$\mu=\mu_\mathrm{ac}+\mu_\mathrm{s}$.  Accordingly, there
  exists a Borel set $B\subset E$ such that $\mu_\mathrm{s}(B)
  =\mu_\mathrm{s}(E)$ and $\lambda(B)=0$.  Since $Q^\mu(dx,dy)=
  \mu(dx)r(x) p(x,y) \lambda(dy)$, it holds $Q^\mu(E\times B)=0$. As
  $Q\ll Q^\mu$ and $Q^{(1)}= Q^{(2)}$, this implies $Q(E\times
  B)=Q(B\times E)=0$.  Since the restriction of $Q$ to $(E\setminus B)
  \times E$ is absolutely continuous with respect to
  $Q^{\mu_\mathrm{ac}}\ll \lambda\times \lambda$, then $Q\ll
  \lambda\times \lambda$. Straightforward manipulations now yield
  \eqref{I1}.
\end{proof}

The following estimate will be used in the proof of the lower bound.

\begin{lemma}
  \label{t:1r}
  Let $(\mu,Q)\in \ms M$ be such that  $I(\mu,Q)<+\infty$. Then
  \begin{equation*}
    \iint \! Q(dx,dy) \, \log \frac{1}{r(x)p(x,y)}  <+\infty. 
  \end{equation*}
\end{lemma}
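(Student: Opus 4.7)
The plan is to derive the bound from the entropy inequality associated with $\Psi$, applied to a truncated logarithm. Since $p \in C(E \times E)$ is strictly positive on the compact set $E \times E$, the function $-\log p$ is bounded, so $\iint Q(dx,dy)\,(-\log p(x,y))$ is trivially finite; it therefore suffices to prove $\iint Q(dx,dy)\,(-\log r(x)) < +\infty$.

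The key tool is the pointwise Young inequality for the Legendre pair $(\Psi, \Psi^*)$, namely $\beta\, a \le \Psi(a) + (e^\beta - 1)$. By Lemma~\ref{t:rl}, $I(\mu, Q) < +\infty$ forces $Q \ll Q^\mu$, where $Q^\mu(dx,dy) := \mu(dx)\,c(x,dy)$; setting $h := dQ/dQ^\mu$ and integrating Young's inequality against $Q^\mu$ with $\beta = F(x, y)$ bounded measurable gives
\begin{equation*}
\int F \, dQ \leq I(\mu, Q) + \int (e^F - 1)\, dQ^\mu.
\end{equation*}
I would then apply this with $F = F_N$, where $F_N(x) := (-\log r(x)) \wedge N$. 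This function is continuous (as the minimum of $N$ and the lower semi-continuous extended-real $-\log r$), depends only on $x$, and satisfies $r(x)\, e^{F_N(x)} = \min(1, r(x) e^N) \le 1$. A short computation using $\int p(x, y)\, \lambda(dy) = 1$ then gives
\begin{equation*}
\int (e^{F_N} - 1)\, dQ^\mu = \int \mu(dx)\,\big[\min(1, r(x) e^N) - r(x)\big] \le 1,
\end{equation*}
so $\int F_N\, dQ \leq I(\mu, Q) + 1$ uniformly in $N$.

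Finally I would pass to the limit $N \to \infty$. Since $r$ vanishes exactly on $E_0$, one has $Q^\mu(E_0 \times E) = \int_{E_0} r\, d\mu = 0$, and the absolute continuity $Q \ll Q^\mu$ gives $Q(E_0 \times E) = 0$; thus $F_N \uparrow -\log r$ on a set of full $Q$-measure, and since $F_N \ge (-\log \|r\|) \wedge 0$ is uniformly bounded below, monotone convergence (applied to $F_N$ shifted by this lower bound) yields $\iint Q(dx, dy)\,(-\log r(x)) \leq I(\mu, Q) + 1 < +\infty$. The main subtlety is the choice of truncation, which must simultaneously keep $e^{F_N}$ integrable against $Q^\mu$ (controlled precisely by the factor $r(x)$ in the density) and converge monotonically to $-\log r$; everything else is routine.
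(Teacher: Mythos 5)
Your proof is correct and takes essentially the same route as the paper. The core step in both is to test the $\Psi$–$\Psi^*$ Young inequality (equivalently, the variational characterization of $I$ from Lemma~\ref{t:rl}) against a bounded truncation of $\log\frac{1}{rp}$ and then pass to the limit by monotone convergence; the only difference is that you split off the harmlessly bounded term $\log\frac{1}{p}$ and truncate $-\log r$ at level $N$, while the paper truncates $\log\frac{1}{rp}$ at level $\log k$ directly, which is a cosmetic variation.
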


\begin{proof}
  For $k>0$, choose as test function in the variational formula
  \eqref{I=I} the function $(x,y)\mapsto \log \big(k\wedge
  1/r(x) p(x,y)\big)$. We deduce
  \begin{equation*}
    \begin{split}
      & \iint \!\! Q(dx,dy) \, \log\Big( k \wedge\frac{1}{r(x)p(x,y)} \Big)
      \\
      &\qquad 
      \le I(\mu,Q) + \iint \!\! \mu(dx)c(x,dy)\,
      \Big( k \wedge\frac{1}{r(x) p(x,y)} -1\Big)
            \le I(\mu,Q) +1
    \end{split}
  \end{equation*}
  where we used that $c(x,dy)=r(x) p(x,y) \lambda(dy)$.
  By taking the limit $k\to \infty$ we conclude the proof.
\end{proof}

\section{Large deviations lower bound}

We state a general result concerning the large deviation lower bound
in which we denote by $\Ent(\tilde P\vert P)$ the relative entropy of
the probability $\tilde P$ with respect to $P$.

\begin{lemma}
  \label{t:rlb}
  Let $\{P_n^\alpha,\,\alpha\in A\}_{n\in \bb N}$ be a sequence of family of probability measures on a completely
  regular topological space $\mc X$. Assume that for each $z\in\mc X$
  there exists a sequence of family of probability measures $\{\tilde{P}^\alpha_n(z)\}$
  weakly convergent to $\delta_z$ uniformly with respect to $\alpha\in A$ and such that
  \begin{equation}
    \label{entb}
    \varlimsup_{n\to\infty} \sup_{\alpha\in A}\frac 1n \Ent\big(\tilde{P}_n^\alpha(z) \big| P_n^\alpha\big)
    \le J(z)
  \end{equation}
  for some $J\colon \mc X\to [0,+\infty]$. Then the sequence of family $\{P_n^\alpha,\,\alpha\in A\}_{n\in \bb N}$
  satisfies uniformly with respect to $\alpha\in A$ the large deviation lower bound with rate function given
  by $\sce J$, the lower semi-continuous envelope of $J$, i.e.,
  \begin{equation*}
    (\sce J) \, (z) := \sup_{U \in\mc N_z} \; \inf_{w\in U} \; J(w)
  \end{equation*}
  where $\mc N_z$ denotes the collection of the open neighborhoods of $z$.
\end{lemma}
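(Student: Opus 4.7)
The plan is to combine the standard entropy inequality with the uniform weak convergence hypothesis. Recall that for any probability measures $\tilde P, P$ on a common measurable space and any measurable set $A$ with $\tilde P(A)>0$, the entropy inequality reads
\[
P(A)\ge \tilde P(A)\exp\!\Big(-\frac{\Ent(\tilde P|P)+e^{-1}}{\tilde P(A)}\Big).
\]
This is immediate from the contraction of relative entropy under the partition $\{A,A^c\}$ combined with the elementary bound $x\log x\ge -e^{-1}$ for $x\ge 0$.

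Fix an open $U\subset\mc X$ and a point $z\in U$. Applying the inequality to the pair $(\tilde P_n^\alpha(z),P_n^\alpha)$ with $A=U$ and dividing by $n$,
\[
\frac 1n\log P_n^\alpha(U)\ge \frac 1n\log \tilde P_n^\alpha(z)(U)-\frac{n^{-1}\Ent\bigl(\tilde P_n^\alpha(z)|P_n^\alpha\bigr)+(en)^{-1}}{\tilde P_n^\alpha(z)(U)}.
\]
Since $\mc X$ is completely regular, one can choose a continuous $f\colon\mc X\to[0,1]$ with $f(z)=1$ and $f\equiv 0$ on the closed set $U^c$; then $\tilde P_n^\alpha(z)(U)\ge \tilde P_n^\alpha(z)(f)$, so the uniform weak convergence $\tilde P_n^\alpha(z)\to\delta_z$ forces $\tilde P_n^\alpha(z)(U)\to 1$ uniformly in $\alpha\in A$. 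Taking first $\inf_{\alpha\in A}$ and then $\varliminf_{n\to\infty}$ of both sides, the hypothesis \eqref{entb} yields
\[
\varliminf_{n\to\infty}\inf_{\alpha\in A}\frac 1n\log P_n^\alpha(U)\ge -J(z).
\]
Since $z\in U$ was arbitrary, the left-hand side is at least $-\inf_{z\in U}J(z)$.

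To replace $J$ by $\sce J$, I observe that for open $U$ one has $\inf_{z\in U}J(z)=\inf_{z\in U}(\sce J)(z)$: the inequality $\sce J\le J$ is built into the definition, and conversely for any $w\in U$ the set $U$ itself belongs to $\mc N_w$, so $(\sce J)(w)\ge \inf_{z\in U}J(z)$, and taking the infimum over $w\in U$ gives the matching bound. This proves the uniform lower bound with rate function $\sce J$.

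The main obstacle is the uniformity in $\alpha$ of the convergence $\tilde P_n^\alpha(z)(U)\to 1$: the naive Portmanteau argument gives this only for each fixed $\alpha$. The complete regularity of $\mc X$ is exactly what allows the reduction from an open-set estimate to an estimate against a continuous bounded test function, on which the uniform weak convergence assumption applies directly. Everything else in the argument is a pointwise-in-$z$ manipulation of the entropy inequality and a soft general-topology identification of $\inf_U J$ with $\inf_U\sce J$.
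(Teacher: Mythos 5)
Your proof is correct. The paper itself does not reproduce an argument for this lemma — it only cites Jensen's thesis \cite[Prop.~4.1]{Je} and remarks that the proof extends to the uniform setting. Your argument supplies exactly that extension: the entropy inequality
\[
P(A)\ge \tilde P(A)\exp\!\Big(-\tfrac{\Ent(\tilde P|P)+e^{-1}}{\tilde P(A)}\Big)
\]
is the standard tool (your derivation via contraction to the two-set partition plus $x\log x\ge -e^{-1}$ is a valid route; the more common route is Jensen's inequality applied to $\log\int_A \frac{dP}{d\tilde P}\,\frac{d\tilde P}{\tilde P(A)}$, but the two are equivalent in content), and you correctly isolate the one genuinely non-pointwise step — showing $\tilde P_n^\alpha(z)(U)\to 1$ uniformly in $\alpha$ — and observe that complete regularity is precisely what lets you dominate the open-set probability from below by a continuous test function on which the uniform weak convergence hypothesis bites directly. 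The identification $\inf_U J=\inf_U\sce J$ for open $U$ is also handled correctly. One minor caveat worth making explicit in a final writeup: the entropy inequality is applied for $n$ large enough that $J(z)<\infty$ forces $\Ent\big(\tilde P_n^\alpha(z)\,|\,P_n^\alpha\big)<\infty$ for all $\alpha$, and $\tilde P_n^\alpha(z)(U)$ is bounded away from $0$; if $J(z)=+\infty$ there is nothing to prove. This does not affect the validity of the argument.
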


This lemma is proven in \cite[Prop.~4.1]{Je} in a
Polish space setting without the dependence on the parameter $\alpha$. The proof extends 
to the present setting. Note that in the our application we shall work only with sequences so that one can avoid 
the details of the general topological  setting.

Our strategy to prove the large deviations lower bound is the
following. We first prove the lower bound for a nice subset of $\ms
M$. In view on Lemma \ref{t:rlb} we then recover the full lower bound
by a suitable density argument.  More precisely, we let
\begin{equation}
  \label{co}
  \begin{split}
  \ms M_0:=\Big\{& (\mu,Q)\in\ms M \,:\: 
  K:= \supp(\mu)\subset E\setminus E_0,\, 
  \supp(Q)= K \times K,\, 
  \\
  &d\mu =\varrho \, d\lambda 
  \textrm{ with $\varrho$ 
    continuous and $\varrho>0$ on $K$},\, 
  \\
  & dQ =q \,d\lambda\times d\lambda 
  \textrm{ with $q$ 
    continuous and $q>0$ on $K\times K$} \Big\}.     
  \end{split}
\end{equation}
We shall prove the entropy bound \eqref{entb} with $J$
given by the restriction of $I$, as defined in \eqref{I}, to $\ms M_0$, that is
\begin{equation}
  \label{Jrf}
  J(\mu,Q):=
  \begin{cases}
    I(\mu,Q)  & \textrm{ if } (\mu,Q)\in\mc M_0,\\
    +\infty   & \textrm{ otherwise. }
  \end{cases}
\end{equation}
Then we complete the proof of the lower bound  by
showing that the lower semi-continuous envelope of $J$ coincides with
$I$.

\begin{proposition}
  \label{t:lbns}
  Let $(\mu, Q)\in \ms M_0$ and $K:=\supp(\mu)$. Then there exists a
  Markov family $\tilde{\bb P}_x$, $x\in K$, such that $\tilde{\bb
  P}_x\circ (\mu_T, Q_T)^{-1}\to \delta_{(\mu,Q)}$ uniformly with
  respect to $x\in K$ and
  \begin{equation}
    \label{eb}
    \varlimsup_{T\to\infty}\,\sup_{x\in K}\,\frac 1 T\,\Ent
    \big(\tilde {\bb P}_{x,\,[0,T]}\big\vert \bb P_{x,\,[0,T] }\big) 
    \leq I(\mu, Q), 
  \end{equation}
  where $\bb P_{x,\,[0,T] }$ denotes the restriction of $\bb P_x$ to
  $D([0,T],E)$.
\end{proposition}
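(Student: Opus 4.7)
The plan is to explicitly construct a perturbed chain having $(\mu, Q)$ as its typical behavior, and then read off the entropy bound from the Radon--Nikodym derivative. Since $(\mu, Q) \in \ms M_0$, the density $\varrho$ is continuous and strictly positive on the compact $K := \supp(\mu) \subset E \setminus E_0$, and $q$ is continuous and strictly positive on $K \times K$. I define perturbed jump rates
$$\tilde c(x, dy) := \frac{q(x,y)}{\varrho(x)} \, \lambda(dy), \qquad x \in K,$$
extended trivially by $\tilde c(x, \cdot) \equiv 0$ for $x \notin K$. The total rate $\tilde r(x) := \int \tilde c(x, dy)$ is continuous, bounded, and bounded away from $0$ on $K$, and the skeleton kernel $\tilde p := \tilde c / \tilde r$ has a strictly positive continuous density on $K \times K$. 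Since $\supp Q = K \times K$, the Markov family $\tilde{\bb P}_x$ with rates $\tilde c$ started at $x \in K$ remains in $K$, and may be treated as a non-degenerate chain on the compact space $K$ satisfying the analogues of Assumption \ref{t:1.1}(i)--(iii).

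Next, the marginal condition $Q^{(1)} = Q^{(2)}$, implicit in $I(\mu, Q) < \infty$, translates by a direct computation into invariance of $\mu$ for $\tilde{\bb P}$. Since $\tilde p$ satisfies a Doeblin condition on $K$ and $1/\tilde r$ is bounded, Theorem~\ref{t:lln} applied to the tilted chain yields $\mu_T \to \mu$ in $\tilde{\bb P}_x$-probability uniformly in $x \in K$. A standard martingale decomposition of $Q_T(F)$, with compensator $\mu_T(g_F)$ where $g_F(x) := \int F(x,y) \tilde c(x, dy)$ and quadratic variation of order $1/T$, upgrades this to $Q_T \to Q$ uniformly, so $\tilde{\bb P}_x \circ (\mu_T, Q_T)^{-1} \to \delta_{(\mu, Q)}$ uniformly in $x \in K$.

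For the entropy, note that on $K \times K$ the measure $\tilde c$ is absolutely continuous with respect to $c(x, dy) = r(x) p(x, y) \lambda(dy)$ with continuous bounded density $\phi(x, y) := q(x, y) / [\varrho(x) r(x) p(x, y)]$. Setting $F := \log \phi$, Girsanov's formula for jump processes applied to paths staying in $K$ gives
$$\log \frac{d \tilde{\bb P}_{x, [0, T]}}{d \bb P_{x, [0, T]}} = T \, Q_T(F) - T \, \mu_T(\tilde r - r),$$
so $\tfrac{1}{T} \Ent(\tilde{\bb P}_{x, [0, T]} \vert \bb P_{x, [0, T]}) = \tilde{\bb E}_x[Q_T(F) - \mu_T(\tilde r - r)]$. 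Passing to the limit $T \to \infty$, the uniform LLN and the boundedness of $F$ and $\tilde r - r$ on $K$ yield that the limit equals $Q(F) - \mu(\tilde r - r)$; combining with the identity $\mu(\tilde r) = \iint q(x, y) \lambda(dx) \lambda(dy) = Q(E \times E)$ and the representation \eqref{I1} (in which $\mu_s = 0$), this limit equals $I(\mu, Q)$, proving \eqref{eb}.

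The main technical subtlety is the passage to the limit inside the expectation $\tilde{\bb E}_x Q_T(F)$, since $Q_T(1)$ is not a priori bounded. I handle this by reducing it through the predictable compensator to $\tilde{\bb E}_x \mu_T(g_F)$, a bounded continuous functional of the empirical measure; the required uniform convergence then follows directly from the uniform LLN for $\mu_T$ under $\tilde{\bb P}$.
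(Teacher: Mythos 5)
Your proposal is correct and follows essentially the same route as the paper's proof: tilt the rates to $\tilde c(x,dy)=q(x,y)/\varrho(x)\,\lambda(dy)$ on $K$, observe $\mu$ is invariant, invoke the Section 3 LLN for $\mu_T$, upgrade to $Q_T$ via the semimartingale decomposition, and then compute the entropy with $F^*=\log[q/(\varrho\, r\, p)]$ (noting $r^{F^*}=\tilde r$). The only (minor) cosmetic difference is how you justify passing the limit inside $\tilde{\bb E}_x Q_T(F^*)$: you route it through the bounded compensator $\tilde{\bb E}_x\mu_T(g_{F^*})$, while the paper records the equivalent $L^2$ convergence of $Q_T(F^*)$ as a byproduct of the martingale estimate; both are valid and amount to the same thing.
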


\begin{proof}
  We can assume $I(\mu,Q)<+\infty$, so that $Q$ has equal marginals.
  Let $d\mu=\varrho \,d\lambda$, $dQ=q \,d\lambda\times d\lambda$, and let
  $\tilde c$ be the transition rates on $K$ defined by $\tilde c
  (x,dy)=q(x,y)/\varrho(x)\,\lambda (dy)$.  We denote by $\tilde{\bb
    P}_x$ the law of the chain with rates $\tilde c$ starting from
  $x$. Since $Q$ has equal marginals, then it is easy to check that
  $\varrho \, d\lambda$ is an invariant measure of the chain.  Moreover,
  the chain is Feller and satisfies $\tilde c(x, dy)\geq c_0 \lambda(dy)$, 
  $x\in K$, with $c_0:=\min_{K\times K} q(x,y)/\varrho(x) >0$. 
  Then, by the arguments of Section \ref{sec3}, $\mu_T$ converges to
  $\varrho\,d\lambda$ in $\tilde{\bb P}_x$ probability, uniformly with
  respect to $x\in K$. In order to prove the law of large numbers for
  the empirical flow $Q_T$, we use the following semi-martingale
  decomposition. For each $F\in C(E\times E)$
  \begin{equation*}
    t\, Q_t(F)=\int_0^t \!ds \int\!  \tilde c(X_s, dy) \, F(X_s,y) +M_t(F),
  \end{equation*}
  where the $\tilde{\bb P}_x$ martingale $M(F)$ has predictable
  quadratic variation 
  \begin{equation*}
    \langle M(F)\rangle_t=\int_0^t \!ds \int\!\tilde c(X_s, dy)\, F(X_s,y)^2. 
  \end{equation*}
  Since $\tilde c(x,dy)\leq C \lambda(dy) $, then $\langle
  M(F)\rangle_t\leq C \,t\, \lambda(F^2)$.  Therefore, as the map
  $K\ni x\mapsto \int\!\tilde c(x,dy) \,F(x,y)$ is continuous, the law of
  large numbers of the empirical measure $\mu_T$ implies
  \begin{equation*}
    \lim_{T\to +\infty} Q_T(F) = 
    \iint \!\mu(dx) \tilde c(x,dy) \, F(x,y) =Q(F),
    \quad \textrm{ in $\tilde{\bb P}_x$ probability},
  \end{equation*}
  uniformly with respect to $x\in K$. Since by Lemma~\ref{t:et} the
  family $\{Q_T\}_{T>0}$ is tight, this implies the law of large
  numbers $\tilde{\bb P}_x\circ (\mu_T, Q_T)^{-1}\to \delta_{(\mu,Q)}$
  uniformly with respect to $x\in K$. 
  Observe that this argument also shows that for each $F$ the family
  of random variables $\{Q_T(F)\}_{T>0}$ converges to $Q(F)$ in $L^2$
  with respect to $\tilde{\bb P}_x$, uniformly in $x\in K$,

  Set $ F^*(x,y):=\log [q(x,y)/\varrho(x)r(x) p(x,y)]$, $(x,y)\in
  K\times K$. Then, by an explicit computation of the Radon-Nikodym
  derivative, see e.g., \cite[\S VI.2]{Br},
  \begin{equation*}
    \frac 1 T\,\Ent \big(\tilde {\bb P}_{x,\,[0,T]}\big\vert \bb
    P_{x,\,[0,T] }\big)
    = \frac 1 T\,\tilde{\bb E}_x 
    \Big( \log \frac{d\tilde {\bb P}_{x,\,[0,T]}}{d\bb
    P_{x,\,[0,T] }}\Big)
    =\tilde{\bb E}_x\Big(Q_T( F^*)-\mu_T(r^{F^*}-r)  \Big).
  \end{equation*}
  Recalling the representation \eqref{I1} for $I$, the law of
  large numbers just proven yields
  \begin{equation*}
    \lim_{T\to\infty} 
    \frac 1 T
    \Ent \big(\tilde {\bb P}_{x,\,[0,T]}\big\vert \bb P_{x,\,[0,T] }\big) 
    = Q(F^*)- \mu(r^{F^*}-r)=I(\mu,Q),
  \end{equation*}
  uniformly with respect to $x\in K$.
\end{proof}

We next show that the lower semi-continuous envelope of $J$, as defined
in \eqref{Jrf}, coincides with $I$.  A set $\ms C \subset \ms M$ is
called $I$-dense in $\ms D \subset\ms M$ if and only if for each $(\mu,Q)\in \ms
D$ such that $I(\mu,Q)<+\infty$ there exists a net 
$\{(\mu_\alpha,Q_\alpha)\} \subset \ms C$ such that
$(\mu_\alpha,Q_\alpha)\to (\mu,Q)$ and $\lim_\alpha
I(\mu_\alpha,Q_\alpha) = I(\mu,Q)$.  We remark that by the lower
semi-continuity of $I$, the second condition is equivalent to
$\varlimsup_\alpha I(\mu_\alpha,Q_\alpha) \le I(\mu,Q)$.

\begin{theorem}
  \label{t:de} 
  The set $\ms M_0$ defined in \eqref{co} is $I$-dense  in $\ms M$. 
\end{theorem}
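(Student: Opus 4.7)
Given $(\mu,Q)\in\ms M$ with $I(\mu,Q)<+\infty$, Lemma~\ref{t:rl} gives $Q^{(1)}=Q^{(2)}$, $Q\ll\lambda\otimes\lambda$ with density $q$, and $\mu=\varrho\lambda+\mu_{\mathrm s}$ together with the representation \eqref{I1}. The plan is to construct a sequence $(\mu_n,Q_n)\in\ms M_0$ with $(\mu_n,Q_n)\to(\mu,Q)$ in $\ms M$ and $\limsup_n I(\mu_n,Q_n)\le I(\mu,Q)$; the matching liminf is automatic from the lower semicontinuity of $I$. The construction proceeds in three successive stages.

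\emph{Stage~1: absolutely continuous approximation.} Fix finite Borel partitions $\{B_i^n\}_i$ of $E$ with $\max_i\mathrm{diam}(B_i^n)\to 0$ and $\lambda(B_i^n)>0$, and replace $\mu_{\mathrm s}$ by $\mu_{\mathrm s}^n:=\sum_i [\mu_{\mathrm s}(B_i^n)/\lambda(B_i^n)]\,\id_{B_i^n}\lambda$, leaving $Q$ unchanged. Then $\varrho\lambda+\mu_{\mathrm s}^n$ is absolutely continuous and converges weakly to $\mu$. The support $S_n$ of the density of $\mu_{\mathrm s}^n$ shrinks to the $\lambda$-null set $\supp\mu_{\mathrm s}$. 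Since $\Phi(a,b)\sim b$ as $b\to\infty$ and Lemma~\ref{t:1r} dominates the logarithmic cross terms, the contribution of $S_n\times E$ to the integral in \eqref{I1} converges to $\mu_{\mathrm s}(r)$, matching the singular contribution.

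\emph{Stage~2: support off $E_0$.} Let $\delta_n\to 0$ be the sequence from Assumption~\ref{t:1.1}(iv), and pick a continuous cutoff $\chi_n\colon E\to[0,1]$ with $\chi_n=0$ on $A_{\delta_n}$ and $\chi_n=1$ on $E\setminus A_{2\delta_n}$. Take the new measure to be $\chi_n(\varrho\lambda+\mu_{\mathrm s}^n)$, renormalized (well defined once $n$ is large, since the Stage~1 measure is absolutely continuous and $\lambda(E_0)=0$). Take the new flow to be $\chi_n(x)\chi_n(y)Q + R_n$, where $R_n$ is a positive correction of product form $h_n(x)g_n(y)\lambda(dx)\lambda(dy)$ with $h_n,g_n$ continuous and concentrated in the collar $A_{2\delta_n}\setminus A_{\delta_n}$, chosen to restore equal marginals. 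The marginal defect of $\chi_n(x)\chi_n(y)Q$ is a signed measure of zero total mass supported in that collar; Assumption~\ref{t:1.1}(iv), giving $\lambda(A_{2\delta_n}\setminus A_{\delta_n})$ comparable to $\lambda(A_{2\delta_n})$, ensures enough volume so that $R_n$ can be taken with uniformly bounded density and $\|R_n\|_{\mathrm{TV}}\to 0$. Combined with Lemma~\ref{t:1r}, this yields the required $I$-convergence.

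\emph{Stage~3: continuous positive densities.} Let $\kappa_m(x,x')$ be kernels jointly continuous on $E\times E$, with $\int\kappa_m(\cdot,x')\,d\lambda=1$ for every $x'$, and approximating the identity in the sense that $\int f(x')\kappa_m(x,x')\lambda(dx')\to f(x)$ for every $f\in C(E)$. Denoting by $\hat\varrho$ and $\hat q$ the densities from Stage~2, set $\tilde\varrho_m(x):=\int\kappa_m(x,x')\hat\varrho(x')\lambda(dx')$ and $\tilde q_m(x,y):=\iint\kappa_m(x,x')\kappa_m(y,y')\hat q(x',y')\lambda(dx')\lambda(dy')$. Both densities are continuous, and the normalization of $\kappa_m$ makes the marginals of $\tilde q_m\,\lambda\otimes\lambda$ coincide with the $\kappa_m$-mollification of the common marginal of the Stage~2 flow, so equal marginals are preserved. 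A final small convex combination with $(\pi,Q^\pi)$ restricted and renormalized to $K_n:=E\setminus A_{\delta_n}$ produces strict positivity on $K_n$ and $K_n\times K_n$; convexity of $I$ together with $I(\pi,Q^\pi)=0$ supplies $I$-convergence. A diagonal choice of parameters in the three stages yields the desired sequence in $\ms M_0$.

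The hard step will be Stage~2: a naive restriction of $Q$ to $K_n\times K_n$ destroys the equal-marginals constraint, and the correction $R_n$ must be placed without inflating the rate function. Assumption~\ref{t:1.1}(iv) is tailored precisely for this, giving enough volume in the collar $A_{2\delta_n}\setminus A_{\delta_n}$ to accommodate a smooth correction whose contribution to $I$ vanishes in the limit. Stages~1 and~3 reduce to standard measure-theoretic approximation combined with convexity arguments.
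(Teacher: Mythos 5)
Your three-stage plan (regularize $\mu_{\mathrm s}$, cut off near $E_0$, mollify) mirrors the paper's chain of four density lemmata (Lemmata~\ref{t:l0}--\ref{t:l3}), so the overall strategy is aligned. However, there is a genuine gap in your Stage~2, which you correctly identify as the hard step.

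The multiplicative cutoff $\tilde Q_n := \chi_n(x)\chi_n(y)\,Q$ produces a marginal defect that is neither supported in the collar $A_{2\delta_n}\setminus A_{\delta_n}$ nor repairable by a product-form correction. Explicitly, writing the difference of the two marginals of $\tilde Q_n$ as a signed measure $\Delta_n(dz)$, one finds $\Delta_n(dz)=\chi_n(z)\bigl[\int(1-\chi_n(x))Q(dx,dz)-\int(1-\chi_n(y))Q(dz,dy)\bigr]$, which is supported on all of $\{\chi_n>0\}=E\setminus A_{\delta_n}$, not on the collar. Moreover a positive product measure $R_n=h_n\otimes g_n\,\lambda\otimes\lambda$ has marginal difference $\bigl(h_n\lambda(g_n)-g_n\lambda(h_n)\bigr)\lambda$, a two-parameter family of very special signed measures; it cannot match an arbitrary $\Delta_n$. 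The paper avoids this entirely by replacing the cutoff with a conditional-averaging operator: on $(E\setminus A_{2\delta})\times(A_{2\delta}\setminus A_\delta)$ the new density is $q_\delta(x,y)=\lambda(A_{2\delta}\setminus A_\delta)^{-1}\int_{A_{2\delta}}q(x,\cdot)\,d\lambda$, with the symmetric averages on the other blocks and the total mass of $Q$ over $A_{2\delta}^2$ spread uniformly over the collar square; one then checks directly that this construction is marginal-preserving by fiat, no correction term is needed, and Jensen controls the increase in the rate. If you want to keep a cutoff-style argument, you would need to engineer a correction term with both marginals explicitly prescribed (not a product), which in effect reinvents the paper's averaging construction.

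Two smaller issues are worth noting. In Stage~1 you keep $Q$ fixed while replacing $\mu_{\mathrm s}$ by an absolutely continuous proxy; the resulting $\Phi$-integral over $S_n\times E$ has no sign or monotonicity in the second argument, so showing it converges to $\mu_{\mathrm s}(r)$ requires genuine estimates, whereas the paper sidesteps the cross term by treating the AC and singular pieces as a convex combination at the very end and invoking convexity plus the exact additivity of \eqref{I1}. In Stage~3, taking a convex combination with ``$(\pi,Q^\pi)$ restricted and renormalized to $K_n$'' does not work as stated: the restriction of $Q^\pi$ to $K_n\times K_n$ no longer has equal marginals, so this pair is not in the domain of $I$; the paper instead achieves strict positivity within Lemma~\ref{t:l0} by a Urysohn smoothing of piecewise-constant densities, with no appeal to $(\pi,Q^\pi)$.
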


The proof is split in few lemmata in which we use the following
notation.  Let $A$ and $B$ be respectively a Borel subset of $E$ of
strictly positive $\lambda$ measure and a Borel subset of $E\times E$
of strictly positive $\lambda\times \lambda$ measure.  For a function
$f\in L^1(d\lambda)$, respectively a function $F\in L^1(d\lambda\times
d\lambda)$ we set
\begin{equation*}\dashint_A f := \frac 1{\lambda(A)}
\int_A\!\lambda(dx) \, f(x),\qquad
\dashiint_B F := \frac 1 {(\lambda\times\lambda)(B)}
\iint_B\!\lambda(dx) \lambda(dy)\, F(x,y).
\end{equation*}

\begin{lemma}
  \label{t:l0}
  Let
  \begin{equation}
    \label{cobis}
    \begin{split}
      \ms M_1:=\Big\{& (\mu,Q)\in\ms M \,:\: K:= \supp(\mu)\subset
      E\setminus E_0,\, \supp(Q)= K \times K,\,
      \\
      & \exists\; D_1,\ldots,D_\ell \subset K \mbox { disjoint open sets
        such that }
      \lambda\Big(K\setminus \bigcup_{i} D_i \Big) =0, 
      \\
      & d\mu
      =\sum_{i} a_i\, \id_{D_i} \, d\lambda,\;
      \; a_i>0,\:i=1,..,\ell,
      \\
      & dQ = \sum_{i,j} b_{ij}\, \id_{D_i\times D_j}
      \, d\lambda\times d\lambda,\;\; b_{ij}>0, \:i,j=1,..,\ell\Big\}.
    \end{split}
  \end{equation}
  The set $\ms M_0$ is $I$-dense in $\ms M_1$.
\end{lemma}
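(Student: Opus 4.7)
The plan is to mollify the piecewise constant densities defining a generic element $(\mu,Q)\in\ms M_1$ into continuous, strictly positive densities, while verifying that the value of the rate functional $I$ is preserved in the limit. Given $d\mu=\sum_i a_i \id_{D_i}\,d\lambda$ and $dQ=\sum_{ij} b_{ij}\id_{D_i\times D_j}\,d\lambda\times d\lambda$ as in \eqref{cobis}, the task is to smooth out the indicators of the partition.

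For each $i$, let $g_i^n\colon E\to[0,1]$ be a continuous function vanishing on $E\setminus D_i$, strictly positive on $D_i$, and satisfying $g_i^n\uparrow\id_{D_i}$ pointwise as $n\to\infty$; a concrete choice is $g_i^n(x):=\min\bigl(1,n\,d(x,E\setminus D_i)\bigr)$. Set
\[
  \varrho_n:=\sum_i a_i g_i^n+\epsilon_n\psi,\qquad
  q_n(x,y):=\sum_{ij}b_{ij}^n\,g_i^n(x)g_j^n(y)+\epsilon_n\chi(x,y),
\]
where $\epsilon_n\downarrow 0$, $\psi$ and $\chi$ are fixed continuous strictly positive functions on a compact neighborhood of $K$, respectively $K\times K$, contained in $E\setminus E_0$, with $\chi$ symmetric in its arguments, and the coefficients $b_{ij}^n:=b_{ij}\lambda(D_i)\lambda(D_j)/[\lambda(g_i^n)\lambda(g_j^n)]\to b_{ij}$ are chosen so that the principal mollified term already has equal marginals; a brief algebraic check, using the relations $\sum_j b_{ij}\lambda(D_j)=\sum_j b_{ji}\lambda(D_j)$ valid for $(\mu,Q)\in\ms M_1$ with $I(\mu,Q)<+\infty$, shows that $q_n$ has equal marginals. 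After normalising $\varrho_n$ so that $\mu_n:=\varrho_n\lambda$ is a probability, the pair $(\mu_n,Q_n)$ with $dQ_n=q_n\,d\lambda\times d\lambda$ lies in $\ms M_0$.

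For the topological convergence, dominated convergence gives $\varrho_n\to\varrho$ and $q_n\to q$ in $L^1$, whence $\mu_n\to\mu$ weakly, and uniform total-variation boundedness of $\{Q_n\}$ promotes $L^1$-convergence to bounded weak* convergence $Q_n\to Q$. For the convergence $I(\mu_n,Q_n)\to I(\mu,Q)$ I would use the representation \eqref{I1}: both $\mu_n$ and $\mu$ are absolutely continuous so the singular parts vanish, and it suffices to show that $\iint\Phi(q_n,\varrho_n rp)\,\lambda(dx)\lambda(dy)\to\iint\Phi(q,\varrho rp)\,\lambda(dx)\lambda(dy)$. The integrand converges pointwise $\lambda\times\lambda$-a.e.\ on $K\times K$, and the key estimate is a uniform $L^\infty$ bound: since $K$ is compact and disjoint from $E_0$, the functions $r$ and $p$ are bounded above and away from zero on $K$ and $K\times K$; together with the elementary inequality $|t\log t|\le 1/e$ for $t\in[0,1]$, the potentially unbounded logarithmic factor $\log g_j^n(y)$ in $q_n\log(q_n/\varrho_n rp)$ is absorbed by the $g_j^n(y)$ factor present in $q_n$. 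Dominated convergence then concludes.

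The main obstacle is to simultaneously enforce strict positivity of $\varrho_n$ and $q_n$ on their supports, preserve equal marginals of $Q_n$, and keep the perturbation's contribution to $I$ of order $o(1)$. The $\epsilon_n$-correction addresses all three: on the $\lambda$-null boundary $K\setminus\bigcup_i D_i$ where the principal mollified terms vanish, strict positivity is inherited from $\psi,\chi$; the marginal constraint is restored by the explicit choice of $b_{ij}^n$ together with the symmetry of $\chi$; and the contribution of the correction region to $I$ is again controlled by the uniform bound above, hence is $o(1)$ as $n\to\infty$.
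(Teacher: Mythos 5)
Your argument follows the same broad plan as the paper's proof (mollify the indicator densities by continuous bump functions, renormalise to keep the marginals equal, conclude by dominated convergence using that $rp$ is bounded away from zero on $K\times K$), but one technical choice introduces a real gap. The paper takes Urysohn functions $\phi_i^n$ that are \emph{equal to $1$ on $\overline{D_i}$} and drop to $0$ at distance $1/n$; since $K=\bigcup_i\overline{D_i}$, the mollified density then satisfies $\varrho_n \ge c>0$ on all of $K$ uniformly in $n$, so no additive correction is needed and membership in $\ms M_0$ is automatic. You instead use $g_i^n\uparrow\id_{D_i}$, which vanishes on $\partial D_i$, and therefore you must graft on the $\epsilon_n\psi$, $\epsilon_n\chi$ corrections to get strict positivity on $K$ --- and that is where the construction breaks.

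You require $\psi$ continuous and strictly positive on a compact neighbourhood $K'\supsetneq K$ with $K'\subset E\setminus E_0$. But a continuous function on $E$ cannot be strictly positive on a compact set and vanish outside it unless that set is clopen. If $\psi>0$ spills past $K'$ then $\supp(\mu_n)=\supp\psi$ is not under your control and may well meet $E_0$, violating the first condition in \eqref{co}; if instead $\psi$ is supported in $K'$ then $\psi$ vanishes somewhere on $\partial K'\subset\supp(\mu_n)$, so $\varrho_n$ is not strictly positive on $\supp(\mu_n)$ and again $(\mu_n,Q_n)\notin\ms M_0$. The repair is to read the conditions of $\ms M_0$ as the paper's proof implicitly does --- $\varrho$ is required to be continuous and strictly positive as a function \emph{on $K=\supp\mu$}, extended by zero outside --- and to take $\psi$ (resp.\ $\chi$, symmetric) a continuous strictly positive function on $K$ (resp.\ on $K\times K$), extended by zero. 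With that change $\supp(\mu_n)=K$ and $\supp(Q_n)=K\times K$, the marginals of $Q_n$ are equal as you computed, and the ratio $q_n/(\varrho_n rp)$ is uniformly bounded (use $g_j^n\le 1$, $\psi\ge c>0$ on $K$ and $rp\ge c'>0$ on $K\times K$), which gives the dominated-convergence step. The paper's outer approximation sidesteps this entirely and is the cleaner route.
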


\begin{proof}
  Let $(\mu, Q)\in\ms M_1$ with $I(\mu, Q)<+\infty$, so that
  $Q^{(1)}=Q^{(2)}$. Denoting with $d(\cdot, \cdot)$ the distance in $E$,
 by Urysohn lemma, for each $D_i$, $i=1,..,\ell,$
  and $n\in\bb N$ there exists a continuous function $\phi^n_i\colon
  K\to [0,1]$ such that
  \begin{equation*}
    \phi^n_i(x)=\begin{cases}1& \mbox{ if }x\in \upbar D_i\\
      0 & \mbox{ if }d(x, D_i)\geq \frac 1 n,
    \end{cases}
  \end{equation*}
  where $\upbar D_i$ is the closure of $D_i$.  We define the sequence
  $(\mu_n,Q_n)$ by $d\mu_n=\varrho_n d\lambda$,
  $dQ_n=q_nd\lambda\times d\lambda$, with $\varrho_n=0$ in $E\setminus
  K$ and $q_n=0$ in $(E\times E)\setminus (K\times K)$, and
  \begin{equation*}
    \varrho_n(x) = \sum_{i}
    a_i \frac{\phi_i^n(x)}
    {\int\! d\lambda\,\phi_i^n}\,\lambda(D_i),
  \end{equation*}
  \begin{equation*}
    q_n(x,y) = \sum_{i,j} b _{ij}\,
    \frac{\phi_i^n(x)}
    {\int\! d\lambda\,\phi_i^n}\frac{\phi_j^n(y)}
    {\int\!d\lambda\,\phi_j^n}\,\lambda(D_i)\,\lambda(D_j).
  \end{equation*}
  In particular $\{(\mu_n, Q_n)\}\subset\ms M_0$, $(\mu_n, Q_n)\to(\mu, Q)$
  and, since $Q^{(1)}=Q^{(2)}$, $Q_n^{(1)}=Q_n^{(2)}$.  In view of 
  \eqref{I1}, 
  \begin{equation*}
    I(\mu_n, Q_n)=\iint_{K\times K} \lambda(dx)\,\lambda(dy)\,
    \Phi\big(q_n(x,y),\varrho_n(x)r(x)p(x,y)\big).    
  \end{equation*}
  Since $r(x)p(x,y)>0$ in $K\times K$ and $\varrho_n\geq c>0$ in $K$,
  by dominated convergence we conclude that $I(\mu_n, Q_n)\to I(\mu,
  Q)$.
\end{proof}

\begin{lemma}
  \label{t:l1}
  Let 
\begin{equation}
  \label{c1}
 \begin{split}
  \ms M_2:=\Big\{ & (\mu,Q)\in\ms M \,:\: 
  K:=\supp(\mu)\subset E\setminus E_0,\: 
  \supp(Q)= K \times K,\,
  \\
  &\mu\ll \lambda,\, Q\ll \lambda\times \lambda \Big\}.
 \end{split}
\end{equation}
The set $\ms M_1$ is $I$-dense in $\ms M_2$.
\end{lemma}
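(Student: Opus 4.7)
The plan is a two-step approximation: first regularize $(\mu,Q)$ so its densities are bounded below on a compact neighborhood of $K$, then discretize by averaging on a fine partition of that neighborhood.

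For the regularization, fix $\delta_0>0$ with $d(K,E_0)>2\delta_0$, set $E_1:=\{x\in E:d(x,K)\leq\delta_0\}$, a compact subset of $E\setminus E_0$ containing $K$ in its interior, and let $m:=\id_{E_1}\lambda/\lambda(E_1)$, $Q^*:=m\otimes m$ (which has equal marginals). For $\epsilon\in(0,1)$ set $(\mu_\epsilon,Q_\epsilon):=(1-\epsilon)(\mu,Q)+\epsilon(m,Q^*)$. This belongs to $\ms M_2$, with densities $\varrho_\epsilon\geq\epsilon/\lambda(E_1)$ on $E_1$, $q_\epsilon\geq\epsilon/\lambda(E_1)^2$ on $E_1\times E_1$, equal marginals preserved, and $(\mu_\epsilon,Q_\epsilon)\to(\mu,Q)$ as $\epsilon\downarrow 0$. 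Convexity of $I$ gives $I(\mu_\epsilon,Q_\epsilon)\leq(1-\epsilon)I(\mu,Q)+\epsilon I(m,Q^*)$, and $I(m,Q^*)<+\infty$ because $r,p$ are continuous and strictly positive on $E_1\times E_1$.

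For the discretization, pick for each $n$ a $(1/n)$-net $\{x_k\}_{k\leq\ell_n}$ in $E_1$ and radii $\eta_k\in(1/(2n),1/n)$ with $\lambda(\partial B(x_k,\eta_k))=0$ (possible since at most countably many radii can carry positive $\lambda$-mass on disjoint spheres). The sets $D_i^n:=B(x_i,\eta_i)\setminus\bigcup_{j<i}\upbar{B(x_j,\eta_j)}$ are disjoint open, of diameter $<2/n$, contained in $E\setminus E_0$ for $n$ large, and cover $E_1$ up to a $\lambda$-null set. Define $a_i^n:=\mu_\epsilon(D_i^n)/\lambda(D_i^n)$, $b_{ij}^n:=Q_\epsilon(D_i^n\times D_j^n)/(\lambda(D_i^n)\lambda(D_j^n))$, and $\mu_n:=\sum_i a_i^n\,\id_{D_i^n}\lambda$, $Q_n:=\sum_{ij}b_{ij}^n\,\id_{D_i^n\times D_j^n}\,\lambda\otimes\lambda$. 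The lower bounds on $\varrho_\epsilon,q_\epsilon$ force $a_i^n,b_{ij}^n>0$; the identity $\sum_j b_{ij}^n\lambda(D_j^n)=Q_\epsilon^{(k)}(D_i^n)/\lambda(D_i^n)$ for $k=1,2$ transfers the equal-marginal condition from $Q_\epsilon$ to $Q_n$. Hence $(\mu_n,Q_n)\in\ms M_1$, and convergence $(\mu_n,Q_n)\to(\mu_\epsilon,Q_\epsilon)$ in the product topology is routine from uniform continuity of test functions on the compact space $E$.

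For the rate estimate, joint convexity of $\Phi(a,b)=a\log(a/b)-a+b$ and Jensen applied to each cell yield, with $s_{ij}^n:=\dashiint_{D_i^n\times D_j^n}\varrho_\epsilon r p$,
\begin{equation*}
\sum_{ij}\lambda(D_i^n)\lambda(D_j^n)\,\Phi(b_{ij}^n,s_{ij}^n)\leq I(\mu_\epsilon,Q_\epsilon).
\end{equation*}
This differs from $I(\mu_n,Q_n)=\sum_{ij}\iint_{D_i^n\times D_j^n}\Phi(b_{ij}^n,a_i^n r(x)p(x,y))\,d\lambda\otimes d\lambda$ only because the second argument of $\Phi$ is $a_i^n rp$ in place of the constant $s_{ij}^n$. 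Combining $|a_i^n r(x)p(x,y)-s_{ij}^n|\leq C a_i^n\omega_{rp}(1/n)$ (uniform continuity of $rp$ together with $\dashint_{D_i^n}(\varrho_\epsilon-a_i^n)\,d\lambda=0$) with the cell-adapted mean-value estimate
\begin{equation*}
|\Phi(b,s_1)-\Phi(b,s_2)|\leq|s_1-s_2|\bigl(1+b/(a_i^n c_1)\bigr),\qquad c_1:=\min_{E_1\times E_1}rp>0,
\end{equation*}
valid since both $s_1,s_2\geq a_i^n c_1$, the discrepancy telescopes to at most $C\omega_{rp}(1/n)[\lambda(E_1)+Q_\epsilon(E\times E)/c_1]=o_n(1)$. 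Therefore $\varlimsup_n I(\mu_n,Q_n)\leq I(\mu_\epsilon,Q_\epsilon)$, and a diagonal extraction $\epsilon=\epsilon_n\downarrow 0$ produces the desired sequence in $\ms M_1$.

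The hard part is this last error estimate. Finiteness of $I(\mu,Q)$ does not force $\varrho_\epsilon\in L^2$, so the naive mean-value bound with a global lower bound $c_0$ on the second argument of $\Phi$ would yield an error of the form $\omega_{rp}(1/n)\int\varrho_n^2\,d\lambda$, which may diverge with $n$. The key is to observe that on each cell one has the finer bound $s_1,s_2\geq a_i^n c_1$, which absorbs the extra $a_i^n$ factor coming from $|s_1-s_2|$ and delivers a uniform $O(\omega_{rp}(1/n))$ error irrespective of the integrability of $\varrho$.
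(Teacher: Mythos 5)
Your proof is correct, but it takes a genuinely different and slightly heavier route than the paper's, which performs the discretization in a single step. The paper picks disjoint sets $D_i^n$ open in the relative topology of $K=\supp(\mu)$ (so no blow-up of the support is needed, even if $K$ has empty interior), sets $\varrho_n$, $q_n$ to be cell averages, and applies Jensen only to the perspective term $q_n\log(q_n/\varrho_n)\le\dashiint q\log(q/\varrho)$; the remaining discrepancy is then exactly $\iint(q_n-q)\log\frac{1}{rp}+\iint(\varrho_n-\varrho)\,rp$, which vanishes simply because $rp$ and $\log(1/rp)$ are bounded continuous on $K\times K$ and the cell averages converge in $L^1$ (equivalently, $q_n-q$ and $\varrho_n-\varrho$ are orthogonal to constants on each cell, so only the oscillation of $rp$, $\log(1/rp)$ enters). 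This bypasses entirely the difficulty you identify at the end: no mean-value bound on $\Phi$ in its second argument, and hence no need for the cell-adapted lower bound $s_1,s_2\ge a_i^n c_1$ nor for the preliminary mixing with $(m,Q^*)$ to get pointwise lower bounds on the densities. Your alternative is valid — the joint convexity of $\Phi$, the identity $\dashint_{D_i^n}(\varrho_\epsilon-a_i^n)\,d\lambda=0$, and the cell-adapted derivative bound all check out, and the diagonalization is legitimate since the approximants live in a bounded (hence metrizable) subset of $\mc M_+(E\times E)$ — but be aware that your ball-based cells can poke outside $E_1$ (you should intersect with $\mathrm{int}(E_1)$ and choose $\delta_0$ so that $\lambda(\partial E_1)=0$), whereas the paper's relative-topology partition of $K$ makes the support bookkeeping automatic. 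What your approach buys is robustness of the discrepancy estimate against unbounded $\varrho$; what the paper's buys is brevity, by not separating $\Phi$ into a Jensen part and a perturbed part at all.
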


\begin{proof}
  Given an integer $n$, pick a family of disjoint open sets
  $D_1^n,..,D_n^n\subset K$ such that $\lambda\big(K\setminus \bigcup_i
  D_i^n\big)=0$ and the diameter of $D_i^n$ vanishes as $n\to\infty$,
  $i=1,\ldots,n$.
  For $(\mu,Q)\in \ms M_2$ with $Q^{(1)}=Q^{(2)}$, let $d\mu
  =\varrho\, d\lambda$ and $dQ= q\, d\lambda\times \lambda$. We define
  $d\mu_n =\varrho_n d\lambda$ and $dQ_n =q_n d\lambda\times
  d\lambda$, with $\varrho_n=0$ in $E\setminus K$, $q_n=0$ in
  $(E\times E)\setminus (K\times K)$, and
  \begin{equation*}
    \begin{array}{ll}
      \displaystyle
     \varrho_n (x) := \dashint_{D^n_i} \!\varrho &\qquad   \textrm{ if }
     x\in D^n_i,\\ 
      \displaystyle
     q_n (x,y) := \dashiint_{D^n_i\times D^n_j} \!\!q & \qquad \textrm{ if } 
       (x,y)\in D^n_i\times  D^n_j.
     \end{array}
  \end{equation*}
  In particular, since $Q^{(1)}=Q^{(2)}$, $Q_n^{(1)}=Q_n^{(2)}$.
  Moreover, $\{(\mu_n,Q_n)\}\subset \ms M_1$, and $(\mu_n,Q_n) \to (\mu,Q)$.
  In view of \eqref{I1}, 
  \begin{equation*}
    \begin{split}
      I(\mu_n,Q_n) &= \iint_{K\times K}\!\! \lambda(dx)\lambda(dy)
      \, \Phi \big(q_n(x,y),\varrho_n (x) \, r(x)p(x,y) \big)
      \\
      &= \sum_{i,j} \lambda(D^n_i) \lambda(D^n_j)\:
      \dashiint_{D^n_i\times D^n_j}\!\Big\{ 
      q_n \log\frac {q_n}{\varrho_n}+q_n\log \frac1
      {rp}-\big( q_n-\varrho_n \, {rp} \big)\Big\}.
    \end{split}
  \end{equation*}
  By convexity of the function $a\log(a/b)$, using Jensen's inequality
  we get
  \begin{equation*}
    q_n(x,y)\log\frac {q_n(x,y)}{\varrho_n(x)}
    \le  \dashiint_{D^n_i \times D^n_j}q\log\frac {q}{\varrho}, 
    \qquad (x,y)\in D^n_i\times D^n_j
 \end{equation*}
  so that
  \begin{equation*}\begin{split}
      I(\mu_n,Q_n)\leq\; & I(\mu, Q)+\sum_{i,j}\iint_{D^n_i \times
        D^n_j}\,d\lambda \times d\lambda
      \;\big(q-q_n\big)\log \frac1 {rp}\\
      & \; +\sum_{i,j}\iint_{D^n_i \times D^n_j}\,\,d\lambda \times
      d\lambda\, \big(\varrho_n-\varrho\big)\,{rp}.
    \end{split}\end{equation*}
  Since both $rp$ and $\log (1/rp)$ are continuous in $K\times
  K$, we conclude the proof taking $n\to\infty$.
\end{proof}

The next lemma is the key step and relies on the technical condition
(iv) in Assumption~\ref{t:1.1}.

\begin{lemma}
  \label{t:l2}
  Let
  \begin{equation}
    \label{c2}
    \ms M_3:=\Big\{ (\mu,Q)\in\ms M \,:\: 
    \mu\ll \lambda,\, Q\ll \lambda\times \lambda \Big\}.
  \end{equation}
  The set $\ms M_2$ is $I$-dense in $\ms M_3$.
\end{lemma}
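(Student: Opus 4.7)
The plan is to truncate $(\mu,Q)$ away from $E_0$ while preserving the equal-marginal property of $Q$. Let $\{\delta_n\}$ be the sequence from Assumption~\ref{t:1.1}(iv), set $K_n := E\setminus A_{\delta_n}$ (a compact subset of $E\setminus E_0$) and $R_n := A_{2\delta_n}\setminus A_{\delta_n}$, and write $d\mu = \varrho\,d\lambda$, $dQ = q\,d\lambda\times d\lambda$. The naive truncations $\varrho\id_{K_n}$ and $q\id_{K_n\times K_n}$ break equal marginals: from $Q^{(1)} = Q^{(2)}$ one obtains
\begin{equation*}
Q(A\times K_n) - Q(K_n\times A) = Q(A_{\delta_n}\times A) - Q(A\times A_{\delta_n}), \qquad A\subset K_n,
\end{equation*}
and in particular $Q(K_n\times A_{\delta_n}) = Q(A_{\delta_n}\times K_n)$, so the measures $\alpha_n(dx) := \id_{K_n}(x) Q(dx\times A_{\delta_n})$ and $\beta_n(dy) := \id_{K_n}(y) Q(A_{\delta_n}\times dy)$ have equal total mass. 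With $\nu_n := \lambda|_{R_n}/\lambda(R_n)$, the rank-one term $\alpha_n\otimes\nu_n + \nu_n\otimes\beta_n$ restores equal marginals; concentrating $\nu_n$ on the annulus $R_n$ is what invokes condition~(iv), which guarantees $\lambda(R_n)$ is comparable to $\lambda(A_{2\delta_n})$. To further ensure $\supp(\mu_n) = K_n$ and $\supp(Q_n) = K_n\times K_n$, I would add positive backgrounds $c_n\id_{K_n}$ to $\varrho_n$ and $\epsilon_n\id_{K_n\times K_n}$ to $q_n$, with $c_n,\epsilon_n \to 0$ to be tuned.

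Convergence $(\mu_n,Q_n)\to(\mu,Q)$ is immediate from total-variation estimates: $\mu(A_{\delta_n})\to 0$ and $Q((A_{\delta_n}\times E)\cup(E\times A_{\delta_n}))\to 0$ since $\mu\ll\lambda$, $Q\ll\lambda\times\lambda$ and $\lambda(A_{\delta_n})\to\lambda(E_0)=0$, while the rank-one correction has total mass $2Q(K_n\times A_{\delta_n}) + \epsilon_n\lambda(K_n)^2 \to 0$.

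The crux is $\varlimsup_n I(\mu_n,Q_n)\le I(\mu,Q)$. Joint convexity and $1$-homogeneity of $\Phi$ give the subadditivity $\Phi(u_1+u_2, v_1+v_2)\le\Phi(u_1,v_1)+\Phi(u_2,v_2)$, which I would apply after splitting
\begin{equation*}
(q_n,\varrho_n r p) = (q\id_{K_n\times K_n},\, \varrho\id_{K_n} r p) + (\alpha_n\otimes\nu_n + \nu_n\otimes\beta_n + \epsilon_n\id_{K_n\times K_n},\, c_n\id_{K_n} r p).
\end{equation*}
The $\Phi$-integral of the first pair, together with the boundary residual $\int_{K_n\times A_{\delta_n}}\varrho_n r p \le \|rp\|\lambda(A_{\delta_n})$, converges to $I(\mu,Q)$ by dominated convergence using \eqref{I1} and $\mu\ll\lambda$ (so the singular contribution is absent). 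The hard part, which I expect to be the main obstacle, is to drive the $\Phi$-integral of the correction pair to zero: as $c_n\to 0$ and $\delta_n\to 0$ the ratio inside the logarithm $\log\bigl(q_n^{\mathrm{corr}}/(c_n rp)\bigr)$ may blow up. Here Assumption~\ref{t:1.1}(iv) is essential — through $1/\lambda(R_n)$ it uniformly bounds the corrective density — and it must be coupled with the integrability $\iint Q\log(1/(rp))<+\infty$ from Lemma~\ref{t:1r} and with a careful choice of $c_n,\epsilon_n\to 0$ to force this integral to vanish.
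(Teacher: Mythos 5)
The overall plan — truncate away from $E_0$, rebalance the lost marginal mass on the annulus $R_n=A_{2\delta_n}\setminus A_{\delta_n}$, invoke condition~(iv) and Lemma~\ref{t:1r}, and control the $\Phi$-cost — is in the right spirit and parallels the paper. Your rank-one correction does restore equal marginals, the subadditivity $\Phi(u_1+u_2,v_1+v_2)\le\Phi(u_1,v_1)+\Phi(u_2,v_2)$ is correct, and the analysis of the main pair $(q\id_{K_n\times K_n},\varrho\id_{K_n}rp)$ with its boundary residual goes through.

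The step you yourself flag as the main obstacle is, however, a genuine gap, and the specific decomposition you chose makes it essentially impossible to close. You pair the corrective density (of order $\alpha_n(x)/\lambda(R_n)$ on $K_n\times R_n$, with $\alpha_n(x)=\int_{A_{\delta_n}}q(x,y)\,\lambda(dy)$) against an \emph{artificial} background $c_n\,rp$. The resulting $\Phi$-integral contains the entropy-type term
\begin{equation*}
  \int_{K_n}\alpha_n(x)\log\alpha_n(x)\,\lambda(dx),
\end{equation*}
and this is not controlled by $I(\mu,Q)<\infty$ nor by Lemma~\ref{t:1r}. Indeed, finiteness of $\iint\Phi(q,\varrho\,rp)$ combined with Lemma~\ref{t:1r} only yields $\iint q\log(q/\varrho)<\infty$, not $\iint q\log q<\infty$: the density $\varrho$ may vanish on sets where $q$ does not vanish fast enough, so $\iint q\log\varrho$ can be $-\infty$. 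Condition~(iv) bounds the ratio $\lambda(A_{2\delta_n})/\lambda(R_n)$ but says nothing about this entropy term, and no tuning of $c_n,\epsilon_n\to 0$ can cancel a possibly infinite quantity.

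The paper's construction avoids this precisely by \emph{not} pairing the correction with an artificial background. It defines $\varrho_\delta$ and $q_\delta$ as conditional averages over $A_{2\delta}$ spread onto the annulus, and then on each of the three boundary regions compares $\Phi$ of the averaged $q$ against the correspondingly averaged $\varrho\,rp$. Jensen's inequality (joint convexity of $\Phi$) then bounds each such $\Phi$-integral by tails of the original finite integral $\iint\Phi(q,\varrho\,rp)$, which vanish as $\delta\to 0$, while the explicit remainder $R_\delta$ only involves the factor $\log[\lambda(A_{2\delta})/\lambda(R_\delta)]$ (bounded along $\delta_n$ by condition~(iv)), the strict positivity of $p$, and $\iint Q\,\log(1/(rp))$ (Lemma~\ref{t:1r}). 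Keeping the corrective piece of $q$ paired with the corresponding averaged piece of $\varrho\,rp$ — rather than with a free constant — is the structure that makes Jensen applicable and is the missing idea in your argument.
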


\begin{proof}
  For $\delta>0$ let $A_\delta\subset E$ be the open set defined by
  $A_\delta:=\{x\in E: r(x)<\delta\}$.  
  Given $(\mu, Q)\in \ms M_3$, with $Q^{(1)}=Q^{(2)}$, we write
   $d\mu=\varrho\,d\lambda$ and $dQ=q\, d\lambda\times d\lambda$.  For
  $\delta>0$, we set 
  \begin{equation*}
      \varrho_\delta(x):=
      \begin{cases}
        \varrho(x) 
        & \mbox{if }\; x\in E\setminus A_{2\delta}
        \\
        \frac 1 {\lambda(A_{2\delta}\setminus A_\delta)}
        \displaystyle\int_{A_{2\delta}}\lambda(dx')\,\varrho(x')
          & \mbox{if }\; x \in A_{2\delta}\setminus A_\delta\\
          0 & \mbox{if }\; x\in A_\delta,
        \end{cases}
  \end{equation*}
  and
  \begin{equation*}
    q_\delta(x,y):=\begin{cases}
        q(x,y) & 
        \mbox{if } \; (x,y)\in \big(E\setminus A_{2\delta}\big)^2
        \\
        \frac 1 {\lambda(A_{2\delta}\setminus A_\delta)}
        \displaystyle\int_{A_{2\delta}}\lambda(dy') \,q(x,y')
        & \mbox{if }\; (x,y)\in 
        E\setminus A_{2\delta}\times A_{2\delta}\setminus A_\delta
        \\
        \frac 1 {\lambda(A_{2\delta}\setminus A_\delta)}
        \displaystyle\int_{A_{2\delta}}\lambda(dx') \,q(x',y)
        & \mbox{if }\; (x,y)\in  
        A_{2\delta}\setminus A_\delta\times E\setminus A_{2\delta}
        \\
        \frac 1 {\big( \lambda(A_{2\delta}\setminus A_\delta)\big)^2}\,
        Q\big(A_{2\delta}\times A_{2\delta}\big)
        &\mbox{if } \; (x,y)\in \big(A_{2\delta}\setminus
        A_\delta\big)^2
        \\
        0 & \mbox{if }\; x\in A_\delta\;\;
        \mbox{or }\; y\in A_\delta.
      \end{cases} 
    \end{equation*}
    By letting $d\mu_\delta:=\varrho_\delta\,d\lambda$ and
    $dQ_\delta:=q_\delta\,d\lambda\times d\lambda$, it follows that
    $(\mu_\delta,\,Q_\delta)\in \ms M_2$ and
    $(\mu_\delta,\,Q_\delta)\to (\mu,\,Q)$.  
    Moreover, since $Q^{(1)}=Q^{(2)}$,
    $Q_\delta^{(1)}=Q_\delta^{(2)}$. In view of \eqref{I1},
    \begin{equation}
      \label{Idel}
         I(\mu_\delta,\, Q_\delta)= \iint\!\! d\lambda\times
        d\lambda \, \Phi\big(q_\delta,\, \varrho_\delta\,rp\big).
     \end{equation}

     Consider first the integral over $(E\setminus A_{2\delta})^2$.
     By definition of $(\mu_\delta, Q_\delta)$,
     \begin{equation}\label{lore0}
       \lim_{\delta \downarrow 0} \iint_{(E\setminus
         A_{2\delta})^2}\!\! d\lambda\times d\lambda \:
       \Phi\big(q_\delta,\varrho_\delta\,rp\big)= I(\mu,Q). 
     \end{equation} 
     The proof of the lemma will be achieved by showing that the
     other contributions to the right hand side of \eqref{Idel} vanish
     ad $\delta\downarrow 0$.

     Consider the integral over $E\setminus A_{2\delta}\times
     A_{2\delta}\setminus A_\delta$, namely
     \begin{equation*}
       \begin{split}
         &\iint_{E\setminus A_{2\delta}\times A_{2\delta}\setminus
           A_\delta}\!\! d\lambda\times d\lambda\;
         \Phi\big(q_\delta,\,\varrho_\delta\,rp\big)
         \\
         &=\frac{\lambda(A_{2\delta})}{\lambda(A_{2\delta}\!\setminus\!
           A_\delta)}\iint_{E\setminus A_{2\delta}\times
           A_{2\delta}\setminus A_\delta}\!\!\!\! \lambda(dx) \lambda(dy)\,
         \Phi\Big(\dashint_{A_{2\delta}}
         q(x,\cdot),\,\dashint_{A_{2\delta}}\varrho(x)\,r(x)p(x,\cdot)
         \Big) +R_\delta
       \end{split}
     \end{equation*}
     where
     \begin{equation*}
       \begin{split}
         R_\delta=&\iint_{E\setminus A_{2\delta}\times
           A_{2\delta}\setminus A_\delta} \!\! \lambda(dx)\lambda(dy)\,
         \\
         &\quad \bigg\{
         \frac{\lambda(A_{2\delta})}{\lambda(A_{2\delta}\!\setminus\!{A_\delta})}
         \, \dashint_{A_{2\delta}} q(x,\cdot) 
         \log
         \frac{\lambda(A_{2\delta})}{\lambda(A_{2\delta}\!\setminus\! {A_\delta})}
         \frac{\dashint_{A_{2\delta}} r(x)p(x,\cdot)}{r(x)p(x,y)}
         \\&\qquad 
         -\Big(
         \frac{\lambda(A_{2\delta})}{\lambda(A_{2\delta}\!\setminus\!{A_\delta})}
         \, \dashint_{A_{2\delta}}\varrho(x)r(x)p(x,\cdot)
         -\varrho(x)r(x)p(x,y)\Big)\bigg\}.
       \end{split}\end{equation*}
     By choosing $\delta=\delta_n$ as in condition (iv) of
     Assumption~\ref{t:1.1} and using that $p$ is
     strictly positive, it follows that $\lim_{n}R_{\delta_n}=0$.
     Moreover, by Jensen inequality,
     \begin{equation*}
       \begin{split}
         &\frac{\lambda(A_{2\delta})}{\lambda(A_{2\delta}\!\setminus\!{A_\delta})}
         \iint_{E\setminus A_{2\delta}\times A_{2\delta}\setminus
           A_\delta}\lambda(dx)\lambda(dy)\,
         \Phi\Big(\dashint_{A_{2\delta}}
         q(x,\cdot),\,\dashint_{A_{2\delta}}\varrho(x)\,r(x)p(x,\cdot)
         \Big)\\ 
         &\qquad 
         \leq \iint_{E\setminus A_{2\delta}\times
           A_{2\delta}}\!\! d\lambda\times d\lambda\, 
         \Phi(q,\varrho\, rp).
       \end{split}
     \end{equation*}
     Hence
     \begin{equation}\label{lore1}\begin{split}
         &\varlimsup_n\iint_{E\setminus A_{2\delta_n}\times
           A_{2\delta_n}\setminus A_{\delta_n}} \!\! d\lambda\times
         d\lambda\, \Phi\big(q_{\delta_n},\varrho_{\delta_n} rp\big)
         \\
         & \leq \lim_n \iint_{E\setminus A_{2\delta_n}\times
           A_{2\delta_n}}\!\!d\lambda\times d\lambda\, \Phi(q,\varrho\,
         rp)=0
       \end{split}\end{equation}

    We next consider the integral over $A_{2\delta}\setminus
     A_\delta\times E\setminus A_{2\delta}$, namely
     \begin{equation*}
       \begin{split}
         &\iint_{A_{2\delta}\setminus A_\delta\times E \setminus
           A_{2\delta}}\!\!\!\! d\lambda\times d\lambda\:
         \Phi\big(q_\delta, \varrho_\delta\, r p\big)\\
         &=
         \frac{\lambda(A_{2\delta})}{\lambda(A_{2\delta}\!\setminus\!{A_\delta})}
         \iint_{A_{2\delta}\setminus A_\delta\times E\setminus
           A_{2\delta}}\!\! \lambda(dx)\lambda(dy)\,
         \Phi\Big(\dashint_{A_{2\delta}}
         q(\cdot,y), \dashint_{A_{2\delta}}\varrho(\cdot)r(\cdot) p(\cdot,y)
         \Big) +R_\delta,
       \end{split}
     \end{equation*}
     where
     \begin{equation*}
       \begin{split}
         R_\delta = 
         &\frac{\lambda(A_{2\delta})}{\lambda(A_{2\delta}\!\setminus\!{A_\delta})}
         \iint_{A_{2\delta}\setminus A_\delta\times E\setminus
           A_{2\delta}}\!\! \lambda(dx)\lambda(dy)\,
         \bigg\{\dashint_{A_{2\delta}}
         q(\cdot,y)\log\frac{\dashint_{A_{2\delta}}
          \varrho(\cdot)\,r(\cdot)p(\cdot,y)}
         {\dashint_{A_{2\delta}}\varrho(\cdot)\,r(x)p(x,y)}
         \\ &
         \qquad\qquad
         +
         \dashint_{A_{2\delta}}\varrho(\cdot)\,r(x)p(x,y)-
         \dashint_{A_{2\delta}}\varrho(\cdot)\,r(\cdot)p(\cdot,y)\bigg\}.
       \end{split}
     \end{equation*}
     By condition (ii) in Assumption~\ref{t:1.1}, there exists $c>0$
     such that $p(x,y)\geq c$. We thus deduce
     \begin{equation*}
       R_\delta\leq
       \frac{\lambda(A_{2\delta})}{\lambda(A_{2\delta}\!\setminus\!{A_\delta})} 
       \iint_{A_{2\delta}\setminus A_\delta\times E\setminus
         A_{2\delta}}\!\! \lambda(dx)\lambda(dy)\, 
       \dashint_{A_{2\delta}} q(\cdot,y)\log \frac {\|rp\|} { c r(x)} +
       \|rp\|\,\mu(A_{2\delta}). 
     \end{equation*} 
     By definition of the set $A_\delta$, $r(x)>\delta$ for $x\in
     A_{2\delta}\setminus{A_\delta}$. Hence
     \begin{equation*}
       \begin{split}
         R_\delta &\leq \iint_{A_{2\delta}\times E\setminus
           A_{2\delta}}\!\! \lambda(dx)\lambda(dy)\,
         q(x,y)\log\frac {\|rp\|}{c\delta} +\|rp\|\,\mu(A_{2\delta})\\
         &\leq \iint_{A_{2\delta}\times E\setminus
           A_{2\delta}}\!\! \lambda(dx)\lambda(dy)\, q(x,y)\log\frac {2\|rp\|}{c
           r(x)} +\|rp\| \,\mu(A_{2\delta}),
       \end{split}
     \end{equation*}
     where in the last inequality we used $r(x)<2\delta$ for $x\in
     A_{2\delta}$. In view of Lemma~\ref{t:1r} we conclude that
     $\varlimsup_{\delta\downarrow 0} R_\delta\leq 0$.  By using Jensen
     inequality as in the previous step we deduce
     \begin{equation}
       \label{lore2}
       \lim_{\delta\downarrow 0}\iint_{A_{2\delta}\setminus
         A_{\delta}\times E\setminus A_{2\delta}}\!\! 
       d\lambda\times d\lambda\; 
       \Phi\big(q_{\delta},\varrho_{\delta}\, rp\big) =0.
     \end{equation}

     We finally consider the integral over $(A_{2\delta}\setminus
     A_\delta)^2$, namely
     \begin{equation*}
       \begin{split}
         &\iint_{(A_{2\delta}\setminus A_\delta)^2}
         \!\! d\lambda\times d\lambda \;
         \Phi\big(q_\delta,\varrho_\delta\, rp\big)\\
         &=
         \Big(\frac{\lambda(A_{2\delta})}
           {\lambda(A_{2\delta}\!\setminus\!{A_\delta})}\Big)^2   
         \iint_{(A_{2\delta}\setminus A_\delta)^2}\!\!
         d\lambda\times d\lambda\, 
         \Phi\Big(\dashiint_{(A_{2\delta})^2}
         q,\dashiint_{(A_{2\delta})^2}
         \varrho\,rp \Big) +R_\delta,
       \end{split}
     \end{equation*}
     where
     \begin{equation*}
       \begin{split}
         R_\delta= & \iint_{(A_{2\delta}\setminus
           A_\delta)^2}\!\! \lambda(dx)\lambda(dy)
         \\
         &\quad \bigg\{
         \Big(\frac{\lambda(A_{2\delta})}
         {\lambda(A_{2\delta}\!\setminus\!{A_\delta})}\Big)^2
         \dashiint_{(A_{2\delta})^2}q
         \log \frac{\lambda(A_{2\delta})}
         {\lambda(A_{2\delta}\!\setminus\!{A_\delta})}
         \frac {\dashiint_{(A_{2\delta})^2} \varrho \,rp}
         {\dashint_{A_{2\delta}} \varrho\, r(x)p(x,y)}\\
         &\qquad +
         \frac{\lambda(A_{2\delta})}{\lambda(A_{2\delta}\!\setminus\!{A_\delta})}
         \dashint_{A_{2\delta}} \varrho\,r(x)p(x,y)
         -\Big(\frac{\lambda(A_{2\delta})}
         {\lambda(A_{2\delta}\!\setminus\!{A_\delta})}\Big)^2
         \dashiint_{(A_{2\delta})^2} \varrho\,rp
         \bigg\}.
       \end{split}
     \end{equation*}
     As in the previous step, we now use that 
     $p(x,y)\geq c $ for some $c>0$. We deduce
     \begin{equation*}
       \begin{split}
         R_\delta\leq
         &
         \Big(\frac{\lambda(A_{2\delta})}
         {\lambda(A_{2\delta}\!\setminus\!{A_\delta})}\Big)^2
         \iint_{(A_{2\delta}\setminus A_\delta)^2}
         \!\! \lambda(dx)\lambda(dy)\,
         \dashiint_{(A_{2\delta})^2} q\,
         \log \frac{\lambda(A_{2\delta})}
         {\lambda(A_{2\delta}\!\setminus\!{A_\delta})}
         \frac {\|rp\|} { c r(x)}
         \\
         &\qquad+ \|rp\|\,\mu(A_{2\delta})\,
         \lambda(A_{2\delta}\!\setminus\! A_\delta).
       \end{split}
     \end{equation*}
     Since $r(x)>\delta$ for $x\in A_{2\delta}\setminus{A_\delta}$,
     \begin{equation*}\begin{split}
         R_\delta &\leq
         \iint_{(A_{2\delta})^2}\!\!\lambda(dx)\lambda(dy)\, 
         q(x,y)\log  \frac{\lambda(A_{2\delta})}
         {\lambda(A_{2\delta}\!\setminus\!{A_\delta})}
         \frac {\|rp\|}{c\delta} + 
         \|rp\|\,\mu(A_{2\delta})\,\lambda(A_{2\delta}\!\setminus\! A_\delta)
         \\
         &\leq \iint_{(A_{2\delta})^2}\!\! \lambda(dx)\lambda(dy)\,
         q(x,y)\log\frac{\lambda(A_{2\delta})}
         {\lambda(A_{2\delta}\!\setminus \!{A_\delta})}
         \frac {2\|rp\|}{c r(x)} +
         C\,\mu(A_{2\delta})\,\lambda(A_{2\delta}\!\setminus\! A_\delta),
       \end{split}
     \end{equation*}
     where in the last inequality we used $r(x)<2\delta$ for $x\in
     A_{2\delta}$. By choosing $\delta=\delta_n$, where $\delta_n$ is
     the sequence in condition (iv) of Assumption~\ref{t:1.1} and
     using Lemma \ref{t:1r}, we deduce that $\varlimsup_{n}
     R_{\delta_n}\leq 0$.
     By using Jensen inequality as in the previous steps we conclude
     \begin{equation}
       \label{lore3}
       \lim_{n\to\infty}
       \iint_{(A_{2\delta_n}\setminus A_{\delta_n})^2}\!\! d\lambda\times d\lambda\; 
       \Phi\big(q_{\delta_n},\varrho_{\delta_n}\,rp\big)=0
     \end{equation}

     Since $\varrho_\delta$ vanishes on $A_\delta$ and $q_\delta$
     vanishes on $\big(A_\delta\times E \big)\cup \big(E\times
     A_\delta\big)$, \eqref{lore0}-\eqref{lore3} yield the statement. 
   \end{proof}

\begin{lemma} 
  \label{t:l3}
  Let 
  \begin{equation}
    \label{c3}
    \ms M_4:=\big\{ (\mu,Q)\in\ms M \,:\: 
    \mu
\perp\lambda,
\, Q=0 \big\},
  \end{equation}
  The set $\ms M_3$ is $I$-dense in $\ms M_4$.
\end{lemma}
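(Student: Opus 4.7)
The plan is this. Take $(\mu,0)\in\ms M_4$, so $\mu\perp\lambda$. Applying \eqref{I1} with $\varrho=0$ and $q=0$ (and using $\Phi(0,b)=b$) one gets $I(\mu,0)=\mu_\mathrm{s}(r)=\mu(r)$, which is finite since $r$ is continuous on the compact space $E$. So the task is to produce a sequence $\{(\mu_n,Q_n)\}\subset\ms M_3$ converging to $(\mu,0)$ in the product topology of $\ms M$ with $I(\mu_n,Q_n)\to\mu(r)$.

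The key idea is to take $Q_n\equiv 0$ for every $n$: this is trivially absolutely continuous with respect to $\lambda\times\lambda$, has equal marginals, and coincides with the target in the bounded weak* topology. Writing $d\mu_n=\varrho_n\,d\lambda$ for any absolutely continuous approximation of $\mu$, the representation \eqref{I1} then gives
\begin{equation*}
I(\mu_n,0) = \iint\lambda(dx)\lambda(dy)\,\varrho_n(x)\,r(x)\,p(x,y) = \mu_n(r),
\end{equation*}
using $\int p(x,y)\,\lambda(dy)=1$. Since $r\in C(E)$, weak convergence $\mu_n\to\mu$ in $\mc M_1(E)$ automatically gives $\mu_n(r)\to\mu(r)=I(\mu,0)$.

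It therefore remains to construct absolutely continuous probabilities $\mu_n$ converging weakly to $\mu$. For each $n$ I would build a Borel partition $\{D_k^n\}$ of $E$ whose atoms all have positive $\lambda$-measure and diameter at most $1/n$; such a partition exists because $E$ is compact and $\lambda$ is strictly positive on open sets (cover $E$ by finitely many open balls of positive $\lambda$-measure and refine them into a partition, absorbing any $\lambda$-null remainders into a neighbouring atom at the cost of mildly enlarging diameters). The standard averaging
\begin{equation*}
\mu_n := \sum_k \frac{\mu(D_k^n)}{\lambda(D_k^n)}\,\lambda\vert_{D_k^n}
\end{equation*}
gives $\mu_n\ll\lambda$, $\mu_n(E)=1$, and a modulus-of-continuity estimate applied to an arbitrary $f\in C(E)$ yields $\mu_n(f)\to\mu(f)$. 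I do not anticipate any serious obstacle: the delicate condition (iv) of Assumption~\ref{t:1.1} that drove the previous lemma plays no role here, since replacing all the mass of $\mu$ by absolutely continuous mass and simultaneously taking the flow identically zero directly recovers the singular contribution $\mu_\mathrm{s}(r)$ in the limit.
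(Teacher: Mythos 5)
Your proposal is correct and follows essentially the same route as the paper: average $\mu$ over a Borel partition of $E$ with small-diameter atoms of positive $\lambda$-measure to get $\mu_n\ll\lambda$, keep $Q_n\equiv 0$, and use the representation \eqref{I1} to get $I(\mu_n,0)=\mu_n(r)\to\mu(r)=I(\mu,0)$. The only difference is cosmetic — you spell out the construction of the partition that the paper merely asserts.
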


\begin{proof}
  Consider a Borel partition $E=\bigcup_{i}D_i^{n}$ such that
  $\lambda(D_i^{n})>0$ and the diameter of $D_i^{n}$ vanishes as
  $n\to\infty$. 
Given $\mu\perp\lambda$,
we set $d\mu_n=\varrho_n d\lambda$, where
  \begin{equation*}
    \varrho_n(x)=\frac{\mu\big(D_i^{n}\big)}{\lambda\big(D_i^{n}\big)},
    \qquad x\in D_i^{n} 
  \end{equation*}
  and $Q_n=0$. In particular $(\mu_n, Q_n)\in\ms M_3$ and 
  $(\mu_n, Q_n)\to (\mu, 0)$. Moreover
  \begin{equation*}
    \lim_{n\to\infty} I(\mu_n,Q_n)=\lim_{n\to\infty}\iint
    \lambda(dx)\,\lambda(dy) \varrho_n(x)r(x)p(x,y)=\mu(r)= 
    I(\mu,0)
  \end{equation*}
  where we used the representation \eqref{I1} for $I(\mu,Q)$.
\end{proof}

\begin{proof}[Proof of Theorem~\ref{t:de}]
  Let $(\mu,Q)\in \ms M$ be such that $I(\mu,Q)<+\infty$. We decompose
$\mu$ into the absolutely continuous and singular parts with respect to $\lambda$, i.e.
  $\mu=\mu_\mathrm{ac} +\mu_\mathrm{s}$ and we recall that by
  Lemma~\ref{t:rl} $Q\ll Q^{\mu_\mathrm{ac}}$.  In particular, letting
  $\alpha = \mu_\mathrm{ac}(E)$,
  \begin{equation*}
    (\mu,Q)= \alpha \, \Big( \frac 1\alpha \mu_\mathrm{ac}, \frac
    1\alpha Q\Big) 
    + (1-\alpha) \Big( \frac 1{1-\alpha} \mu_\mathrm{s}, 0\Big).
  \end{equation*}
  In view of Lemmata \ref{t:l0}--\ref{t:l2} there
  exists a sequence $\{(\mu_{1,n},Q_{1,n})\} \subset \ms M_0$ such
  that $(\mu_{1,n},Q_{1,n}) \to ( \alpha^{-1} \mu_\mathrm{ac},
  \alpha^{-1} Q\big)$ and $I(\mu_{1,n},Q_{1,n}) \to I\big( \alpha^{-1}
  \mu_\mathrm{ac}, \alpha^{-1} Q\big)$.  Moreover, by Lemmata
  \ref{t:l0}--\ref{t:l3}, there exists a sequence
  $\{(\mu_{2,n},Q_{2,n})\} \subset \ms M_0$ such that
  $(\mu_{2,n},Q_{2,n}) \to ( (1-\alpha)^{-1} \mu_\mathrm{s}, 0\big)$
  and $I(\mu_{2,n},Q_{2,n}) \to I\big( (1-\alpha)^{-1}
  \mu_\mathrm{s},0\big)$.

  The sequence $\{\alpha (\mu_{1,n},Q_{1,n}) +
  (1-\alpha)(\mu_{2,n},Q_{2,n})\}$ is in $\ms M_0$ and converges to
  $(\mu,Q)$. By the convexity of $I$,
  \begin{equation*}
    I\big( \alpha (\mu_{1,n},Q_{1,n}) + (1-\alpha)(\mu_{2,n},Q_{2,n})
    \big)
    \le \alpha I (\mu_{1,n},Q_{1,n}) + (1-\alpha) I (\mu_{2,n},Q_{2,n})
  \end{equation*}
  so that  
  \begin{equation*}
    \begin{split}
    &\varlimsup_n I\big( \alpha (\mu_{1,n},Q_{1,n}) +
    (1-\alpha)(\mu_{2,n},Q_{2,n})\big) 
    \\
    &\qquad \le 
    \alpha I ( \alpha^{-1} \mu_\mathrm{ac}, \alpha^{-1} Q\big)
    + (1-\alpha) I ( (1-\alpha)^{-1} \mu_\mathrm{s},0\big)
    = I(\mu,Q)
    \end{split}
  \end{equation*}
  where we used the representation \eqref{I1} in the last equality.
\end{proof}

\begin{proof}[Proof of Theorem~\ref{t:ldp} (conclusion).]
  The upper bound follows from Proposition~\ref{t:ub} and
  Lemma~\ref{t:rl}, which also yields the convexity and lower
  semi-continuity of $I$. 
  Recalling \eqref{Jrf}, Lemma~\ref{t:rlb} and Proposition~\ref{t:lbns}
  imply the uniform lower bound with rate function $\sce J$. In view
  of the lower semi-continuity of $I$ and Theorem~\ref{t:de} we
  conclude $ \sce J =I$.  Finally, the goodness of the rate function
  $I$ follows from the exponential tightness proven in
  Lemma~\ref{t:et} and \cite[Lemma~1.2.18]{DZ}.
\end{proof}

\section{Projections}
\label{sec:cor}

\subsection*{Large deviations of the empirical measure}
In the context of irreducible finite state Markov chain, the
representation of the Donsker-Varadhan functional in terms of $I$ has
been obtained in \cite{BP,KL}. This result has been proven for
countable state space in \cite{BFG2}. The proof presented below relies
on the variational representation of Lemma~\ref{t:rl} and on the
Sion's minimax theorem. It takes advantage of the compactness of $E$.

\begin{proof}[Proof of Corollary~\ref{t:ldem}]
  Let $I_1\colon \mc M_1(E)\to [0,+\infty]$ be the functional
  \begin{equation}
    \label{pI}
    I_1(\mu) := \inf_{Q\in \mc M_+(E\times E)} I(\mu,Q). 
  \end{equation}
  By contraction principle and Theorem~\ref{t:ldp}, as $T\to +\infty$
  the family $\big\{\bb P_x\circ \mu_T^{-1}\big\}_{T>0}$ satisfy a
  large deviation principle with rate function $I_1$. To complete the
  proof it is therefore enough to show $\hat I =I_1$.

  We first prove the inequality $I_1\ge \hat I$.  In view of \eqref{I}
  we can restrict the infimum on the right hand side of \eqref{pI} to
  elements $Q\in \mc M_+(E\times E)$ satisfying $Q^{(1)}=Q^{(2)}$.
  For such elements, by the variational characterization of $I$ proven in
  Lemma~\ref{t:rl},
  \begin{equation*}
    I(\mu,Q) = \sup_{F\in C(E\times E)}  
    \big\{ Q(F) - \mu\big(r^F -r\big) \big\}.
  \end{equation*}
  Fix $f\in C(E)$ and choose $F(x,y)=f(y)-f(x)$, $(x,y)\in E\times E$.
  Since $Q^{(1)}=Q^{(2)}$,
  \begin{equation*}
    I(\mu,Q) \ge - \iint \!\! \mu(dx) c(x,dy)\, 
    \big[ e^{f(y)-f(x)} -1\big].
  \end{equation*}
  As the right hand side does not depend on $Q$ we deduce
  \begin{equation*}
    I_1(\mu) \ge - \iint \!\! \mu(dx) c(x,dy)\, 
    \big[ e^{f(y)-f(x)} -1\big]
  \end{equation*}
  and the result follows by optimizing on $f$.

  We next prove the inequality $I_1\le \hat I$.  Fix $\mu\in \mc
  M_1(E)$ and observe that $I_1(\mu) \le I(\mu,0)=\mu(r)<+\infty$.
  By Lemma~\ref{t:rl},
  \begin{equation*}
    I_1(\mu) =\inf_{Q}\:  \sup_{\phi,F } \: \Gamma_\mu (Q,\phi,F)
  \end{equation*}
  where the infimum is carried out over all $Q\in  \mc M_+(E\times
  E)$, the supremum over all $(\phi,F)\in C(E)\times C(E\times E)$, and 
  $\Gamma_\mu\colon \mc M_+(E\times E)\times C(E)\times C(E\times E)\to
  \bb R$ is the continuous functional defined by
  \begin{equation*}
    \Gamma_\mu (Q,\phi,F) 
    = Q^{(1)} (\phi) - Q^{(2)}(\phi) + Q(F) - \mu (r^F -r).
  \end{equation*}
  As follows from a direct application of H\"older inequality, the map
  $F\mapsto \mu (r^F)$ is convex. Hence, for each $Q$ the map
  $(\phi,F)\mapsto\Gamma_\mu (Q,\phi,F)$ is concave.  Since for each
  $(\phi,F)$ the map $Q\mapsto \Gamma_\mu (Q,\phi,F)$ is affine, we
  would like to apply the Sion's minimax theorem to get
  \begin{equation}
    \label{minmax}
    I_1(\mu) = \sup_{\phi,F } \: \inf_{Q}\: 
    \Gamma_\mu (Q,\phi,F).
  \end{equation}
  Since neither $\mc M_+(E\times E)$ nor $C(E)\times C(E\times E)$ is 
  compact, \eqref{minmax} needs however to be justified. Postponing this
  step, we first conclude the argument. By choosing on the right hand
  side of \eqref{minmax} $Q(dx,dy) = \mu(dx) c(x,dy) e^{F(x,y)}$ we
  get 
  \begin{equation*}
    \begin{split}
    I_1(\mu) &\le \sup_{\phi,F} \iint\!\! \mu(dx) c(x,dy) \big[
    e^{F(x,y)} \big(F(x,y) + \phi(x) -\phi(y) \big) 
    -\big( e^{F(x,y)} -1\big) \big] \\
    &\le  \sup_{\phi} \iint\!\! \mu(dx) c(x,dy) 
    \sup_{\lambda \in \bb R} 
    \big[e^\lambda \big(\lambda + \phi(x) -\phi(y)\big)  
    - \big(e^{\lambda} -1\big) \big]\\
    &=  \sup_{\phi} \Big\{ -\iint\!\! \mu(dx) c(x,dy) 
    \, \big[ e^{\phi(y) -\phi(x)} -1\big]  \Big\}= \hat I(\mu).
    \end{split}
  \end{equation*}

  We are left with the proof of \eqref{minmax}. To this end we apply
  the generalization of the Sion minimax theorem proven in \cite{H}
  that states the following. Under the continuity and
  convexity/concavity assumptions discussed before, a sufficient
  condition for the minimax identity
  \begin{equation*}
    \inf_{Q}\:  \sup_{\phi,F } \: \Gamma_\mu (Q,\phi,F) 
    = \sup_{\phi,F } \: \inf_{Q}\:  \Gamma_\mu (Q,\phi,F)
  \end{equation*}
  is that there exist a nonempty convex compact 
  $K\subset C(E) \times C(E\times E)$ and a compact 
  $\mc H \subset \mc M_+(E\times E)$ such that 
  \begin{equation}
    \label{scmm}
     \inf_{Q}\:  \sup_{\phi,F } \: \Gamma_\mu (Q,\phi,F) 
     \le  \inf_{Q\not\in \mc H } \: 
     \sup_{(\phi,F) \in K } \: \Gamma_\mu(Q,\phi,F).
  \end{equation}
  We choose $\mc H =\big\{ Q\in \mc M_+(E\times E):\, \|Q\|_{\mathrm{TV}}\le h
  \big\}$ (here $\|Q\|_{\mathrm{TV}}$ is the total mass of $Q$) for some $h>0$ to be
  fixed later and let $K$ be the singleton $K=\{ (0,1)\}$. If
  $Q\not\in \mc H$ then 
  \begin{equation*}
    \Gamma_\mu(Q,0,1) = Q(1) - (e-1) \mu(r) \ge h - (e-1) \mu(r).
  \end{equation*}
  Since, as already observed, 
  \begin{equation*}
    \inf_{Q}\sup_{\phi,F }\Gamma_\mu (Q,\phi,F)  \le I(\mu,0) = \mu(r), 
  \end{equation*}
  by choosing $h\ge e \mu(r)$ the condition \eqref{scmm} holds and we
  have concluded the proof of \eqref{minmax}.
\end{proof}

\subsection*{Large deviations of the empirical flow}

\begin{proof}[Proof of Corollary~\ref{t:ldem2}]
  Let $I_2\colon \mc M_+(E\times E)\to [0,+\infty]$ be the functional
  \begin{equation}
    \label{pI2}
    I_2(Q) := \inf_{\mu\in \mc M_1(E)} I(\mu,Q). 
  \end{equation}
  By contraction principle and Theorem~\ref{t:ldp}, as $T\to +\infty$
  the family $\big\{\bb P_x\circ Q_T^{-1}\big\}_{T>0}$ satisfy a
  large deviation principle with rate function $I_2$. To complete the
  proof it is therefore enough to show $\tilde I =I_2$.

  We first prove the inequality $I_2\ge \tilde I$. We use the variational characterization \eqref{I=I}
restricting to $Q$ with equal marginals. Given $\alpha\in (-r_{\mathrm m}, +\infty)$, we chose 
$$
F(x,y)=\log\Big[\frac{Q(dx, dy)}{Q(dx, E)c(x, dy)}(r(x)+\alpha)   \Big].
$$
By direct computations $Q^\mu(e^F-1)=\alpha$, so that
\begin{equation*}\begin{split}
 I(\mu,Q)&\geq Q(F)-Q^{\mu}\big( e^F-1 \big)\\
&=\iint Q(dx,dy)\log\Big[\frac{Q(dx, dy)}{Q(dx, E)c(x, dy)}(r(x)+\alpha) 
  \Big]\;-\alpha.
\end{split}\end{equation*}
The result follows by optimizing over $\alpha$.
We observe the  choice of $F$ is not really legal since it could be not continuous, however a truncation procedure similar to the one in Lemma
\ref{t:rl} leads to the same conclusion.

We next prove $I_2\leq \tilde I$. By definition of $\tilde I$ we can assume that   $Q$ has equal marginals.
Given $Q$, if there exists $\alpha>-r_{\mathrm m}$
such that $\iint Q(dx, E)/(r(x)+\alpha)=1$ we chose
$$\mu^Q(dx)=\frac {Q(dx, E)}{r(x)+\alpha},$$
then $I_2(Q)\leq I(\mu^Q, Q)$. By a direct computation 
$$I(\mu^Q, Q)=\iint Q(dx, dy)\log \Big[\frac {Q(dx, dy)}{Q(dx, E)c(x, dy)}(r(x)+\alpha)   \Big]-\alpha\leq \tilde I (Q).$$
If such $\alpha$ does not exists, by monotone convergence
$$
\int \frac {Q(dx, E)}{r(x)-r_{\mathrm m}}\leq 1
$$
and we can chose
$$\mu^Q(dx)=\frac {Q(dx, E)}{r(x)-r_{\mathrm m}} + \Big(1-\iint \frac {Q(dz, E)}{r(z)-r_{\mathrm m}} \Big)\delta_{x_0}(dx),$$
with $x_0$  such that $r(x_0)=r_{\mathrm m}$. From $\eqref{I1}$ by direct computation we obtain
$$
I(\mu^Q, Q)=\iint Q(dx, dy)\log \Big[\frac {Q(dx, dy)}{Q(dx, E)c(x, dy)}(r(x)-r_\mathrm m)   \Big]+r_{\mathrm m} 
\leq \tilde I (Q),$$
where the last inequality follows  by monotone convergence.
\end{proof}

\subsection*{Acknowledgements} 
We thank the referee for suggesting us to formulate result for a general (closed) set $E_0$.

\end{document}